\numberwithin{equation}{section}
\newtheorem{theorem}{Theorem}[section]
\newtheorem{corollary}[theorem]{Corollary}
\newtheorem{remark}[theorem]{Remark}
\newtheorem{proposition}[theorem]{Proposition}
\newtheorem{example}[theorem]{Example}
\theoremstyle{definition}
\newtheorem{definition}[theorem]{Definition}
\newcommand{\dd}{\,\mathrm{d}}
\renewcommand{\d}{\mathrm{d}}
\newcommand{\supp}{\mathrm{supp}}
\newcommand{\R}{\mathbb{R}}
\newcommand{\N}{\mathbb{N}}
\newcommand{\Z}{\mathbb{Z}}
\newcommand{\1}{\mathbf{1}}
\renewcommand{\P}{\mathbb{P}}
\renewcommand{\phi}{\varphi}
\newcommand{\s}{\mathfrak{s}}
\newcommand{\vertiii}[1]{{\left\vert\kern-0.25ex\left\vert\kern-0.25ex\left\vert #1 \right\vert\kern-0.25ex\right\vert\kern-0.25ex\right\vert}}
\title{Stochastic Analysis with Modelled Distributions}
\author{Chong Liu, David J. Pr\"omel, and Josef Teichmann}
\begin{document}

\maketitle

\begin{abstract}
  Using a Besov topology on spaces of modelled distributions in the framework of Hairer's regularity structures, we prove the reconstruction theorem on these Besov spaces with negative regularity. The Besov spaces of modelled distributions are shown to be UMD Banach spaces and of martingale type~$2$. As a consequence, this gives access to a rich stochastic integration theory and to existence and uniqueness results for mild solutions of semilinear stochastic partial differential equations in these spaces of modelled distributions and for distribution-valued SDEs. Furthermore, we provide a Fubini type theorem allowing to interchange the order of stochastic integration and reconstruction.
\end{abstract}

\noindent\textbf{Key words and phrases:} UMD and M-type $2$ Banach spaces, regularity structures, stochastic integration in Banach spaces, stochastic partial differential equations.

\noindent\textbf{MSC 2010 Classification:} Primary: 35R60, 46E35, 60H15; Secondary: 46N30, 60H05.

\section{Introduction}

Modelled distributions are the spine of Hairer's theory of regularity structures~\cite{Hairer2014}: they constitute a way to describe locally generalized functions of certain degrees of (ir-)regularity by means of functions (``modelled distributions'') taking values in a graded vector space (``regularity structure''), which satisfy certain graded estimates. A good and simple example are H\"older continuous functions, which can be locally described by the coefficients of their Taylor's expansion up to a certain order around each point with respect to polynomials. One of the key insight of the theory of regularity structures is that the solutions of some singular stochastic partial differential equations, like the KPZ equation or the $2$D~parabolic Anderson model, are more suitable described using an enlarged basis of monomials. 

In the abstract setting of regularity structures, the so-called reconstruction operator provides a way to continuously map the modelled distributions\footnote{collections of ``Taylor'' coefficients around each point w.r.t. a set of abstract monomials} to generalized $\R^d$-valued functions on space-time, which is the assertion of the celebrated reconstruction theorem, see Theorem~3.10 in~\cite{Hairer2014}. In the seminal work~\cite{Hairer2014}, the spaces of modelled distributions are equipped with the direct analogues of H\"older norms, which was more than sufficient for the original applications and it is most natural from the point of view of the reconstruction theorem.  

However, with stochastic analysis and especially stochastic integration in mind, the more general Besov and, in particular, Sobolev--Slobodeckij type norms are a more natural choice since these norms provide a suitable geometric structure to have a rich stochastic integration theory at hand, as we will demonstrate in the second part of the present paper. Other motivations to work with specific Besov norms on the spaces of modelled distributions recently arose in the work~\cite{Hairer2018} of Hairer and Labb\'e, where the solution to the multiplicative stochastic heat equation starting from a Dirac mass is constructed, and in the work~\cite{Cannizzaro2017} of Cannizzaro, Friz and Gassiat, where Malliavin calculus is implemented in the context of regularity structures.

It is the goal of the first part of the article to show that the reconstruction theorem still holds in full generality if the spaces of modelled distributions are equipped with Besov norms. While \cite{Hairer2017} already provide the reconstruction theorem for Besov spaces of modelled distributions assuming the regularity parameter to be positive, we complement their result by proving the reconstruction theorem for Besov spaces with negative regularity, see Theorem~\ref{thm:reconstruction negative}. As in the case of Hairer's original reconstruction theorem, these two regimes require different proofs but both can be proven along similar lines of arguments as used by Hairer's original proof on the existence of the reconstruction operator. Let us also mention that the reconstruction theorem was recently obtained by Hensel and Rosati~\cite{Hensel2017} for Triebel--Lizorkin type spaces with positive regularity parameter. A natural application of the reconstruction operator applied to modelled distributions with negative regularity is Lyons--Victoir's extension theorem~\cite{Lyons2007a}, cf.~\cite[Proposition~13.23]{Friz2014}.

The reconstruction operator $\mathcal{R}$ maps modelled distributions to generalized functions in a linear and bounded way with additional continuous dependence on the underlying model. The reconstruction operator can be considered as an abstract integration operation, which depends on the particular regularity structure. It generalizes Young integration~\cite{Young1936} and controlled rough path integration~\cite{Lyons1998,Gubinelli2004}, etc. The main result of this article (Theorem~\ref{thm:fubini theorem}) can be seen as a Fubini type theorem, which asserts for modelled-distribution-valued predictable processes~$H$ and a Brownian motion~$W$ that the order of ``integration'' can be interchanged
\begin{equation*}
  \bigg \langle \mathcal{R} \big({(H \bullet W)} \big) , \psi \bigg \rangle = \bigg( \big \langle \mathcal{R}(H),\psi \big \rangle \bullet W \bigg ) \, 
\end{equation*}
for every test function $\psi$, where $(H \bullet W)$ denotes the stochastic integral of~$H$ w.r.t.~$W$. This Fubini type theorem has a deeper meaning if the Besov space~$\mathcal{D}^\gamma_{p,q}$ of modelled distributions has a geometric structure such that a rich stochastic integration theory is accessible.

There are several approaches to stochastic integration for Banach space valued processes, some of them involve properties of the Banach space like martingale type~$2$ or unconditional martingale difference (UMD). It depends on the purpose in mind, which property is actually needed, but for integrals with respect to Brownian motion martingale type~$2$ or UMD is favorable and both allow for treating stochastic partial differential equations like stochastic evolution equations in Banach spaces, see e.g.~\cite{Brzezniak1995}, \cite{Brzezniak2018} or \cite{vanNeerven2008}. We shall prove here that the Besov space~$\mathcal{D}^\gamma_{p,q}$ of modelled distributions (for $p,q \geq 2$) has indeed the martingale type~$2$ and the UMD property, respectively, see Proposition~\ref{prop:UMD property}. Since this suffices to set up a rich stochastic integration theory as needed for the treatment of stochastic partial differential equations with Brownian drivers like in the books of Da~Prato--Peszat--Zabczyk~\cite{Peszat2007,DaPrato2014}, the results in the second part of the article pave the way to combine the powerful tools of stochastic integration and Hairer's theory of regularity structure in a novel way. As an exemplary applications, we present existence and uniqueness results for mild solutions of semilinear stochastic partial differential equations with multiplicative noise in Besov spaces of modelled distributions and for It\^o stochastic differential equations on spaces of H\"older functions with certain unbounded vector fields. Furthermore, using the reconstruction operator and the Fubini type theorem, we show that semilinear SPDEs in classical Besov spaces can be equivalently considered as semilinear SPDEs in suitable Besov spaces of modelled distributions.

\medskip
\noindent {\bf Organization of the paper:} In Section~\ref{sec:reconstruction} we briefly introduce the necessary elements of regularity structures and prove the reconstruction theorem for Besov spaces of modelled distributions. The Banach space properties of these spaces are established in Section~\ref{sec:stochastic integration} as well as the Fubini type theorem. As applications, we treat semilinear stochastic partial differential equations in Besov spaces of modelled distributions in Section~\ref{sec:SPDEs} and distributions-valued It\^o stochastic differential equations in Section~\ref{sec: distribution valued SDEs}.

\medskip
\noindent {\bf Notation:} 
Throughout the entire paper, we are given a scaling $\s:=(\s_1, \dots, \s_d)\in \N^d$ and we consider the $\s$-scaled ``norm'' $\|x\|_{\s}:= \sup_{i=1,\dots, d} |x_i|^{1/\s_i}$ for $x=(x_1,\dots,x_d)\in \R^d$ and $d\in\mathbb{N}$. The ball in $\mathbb{R}^{d}$, around $x\in\mathbb{R}^{d}$ with radius $R>0$ and with respect to the $\s$-scaled norm is denoted by $B(x,R)$. The zero is included in our notation of natural numbers $\mathbb{N}=\{0,1,2,\ldots\}$ and for a multi-index $k\in\mathbb{N}^{d}$, we set $|k|:=|(k_{1},\dots,k_{d})|:=k_{1}+\cdots+k_{d}$, $|k|_\s:=\s_1 k_{1}+\cdots+ \s_d k_{d}$, and $k!:=k_{1}!\cdots k_{d}!$. 

For two real functions $a,b$ depending on variables $x$ one writes $a\lesssim b$ or $a\lesssim_z b$ if there exists a constant $C(z)>0$ such that $a(x) \leq C(z)\cdot b(x)$ for all $x$, and $a\sim b$ if $a\lesssim b$ and $b\lesssim a$ hold simultaneously. By $\lfloor a\rfloor$ for a number $a\in\mathbb{R}$ we mean $\lfloor a\rfloor:=\sup\{b\in\mathbb{Z}\,:\, b\leq a\}$. 

The space of H\"older continuous functions $\phi\colon\mathbb{R}^{d}\to\mathbb{R}$ of order $r \geq 0$ is denoted by $\mathcal{C}^{r}$, that is, $\phi$ is bounded if $ r = 0 $, H\"older continuous for $ 0 < r \leq 1 $  (which amounts precisely to Lipschitz continuous for $r=1$, the derivative does not necessarily exist). For $ r > 1 $ not an integer the function is $\lfloor r\rfloor $-times continuously differentiable and the derivatives of order $\lfloor r\rfloor$ are H\"older continuous of order $r-\lfloor r\rfloor$. The space $\mathcal{C}^{r}$ is equipped with the norm
\[
  \lVert f \rVert_{C^r} := \sum_{k=0}^{\lfloor r \rfloor} \| D^k f \|_\infty + \1_{r>\lfloor r \rfloor} \| D^{\lfloor r \rfloor} f \|_{r - \lfloor r \rfloor},
\]
where $\lVert \cdot \rVert_\beta$ denotes the $\beta$-H\"older norm for $\beta \in (0,1)$, and $\lVert \cdot \rVert_\infty$ denotes the supremum norm.
For integers $ r > 1 $ the $(r-1)$-th derivative exists and is Lipschitz continuous. If a function $\phi\in\mathcal{C}^{r}$ has compact support, we say $\phi\in\mathcal{C}_{0}^{r}$. Additionally, we use $\phi\in\mathcal{B}^{r}$ if $\phi\in\mathcal{C}_{0}^{r}$ is such that $\|\phi\|_{\mathcal{C}^{r}}\leq 1$ and $\supp\,\phi\subset\mathcal{B}(0,1)$, and $\phi\in\mathcal{B}^{r}_{n}$ for $n\in \N$ if $\phi \in \mathcal{B}^{r}$ and $\phi$ annihilates all polynomials of scaled degree at most $n$. As usual $\mathcal{C}^{\infty}=\mathcal{C}^{\infty}(\mathbb{R}^{d})$ stands for the space of smooth functions $\phi\colon\mathbb{R}^{d}\to\mathbb{R}$ and $\mathcal{C}_{0}^{\infty}$ is the subspace of all smooth functions with compact support. The space $\mathcal{D}^{\prime}=\mathcal{D}^{\prime}(\mathbb{R}^{d})$ is the space of (tempered) distributions, that is, the topological dual of the Schwartz space of rapidly decreasing infinitely differentiable functions.  

The space $L^{p}:=L^{p}(\mathbb{R}^{d},\d x)$, $p\geq1$, is the usual Lebesgue space, that is, the space of functions~$f$ such that $\int_{\mathbb{R}^{d}}|f(x)|^{p}\dd x<\infty$. We also set $L^q_\lambda := L^q((0,1),\lambda^{-1}\d \lambda)$ for $q\geq 1$ and write $L^{p}(\mathbb{R}^{d};B)$ for the $L^p$-space of functions $f\colon \R^d\to B$ where $B$ is Banach space. The notation $\langle f, g\rangle$ is used for the $L^2$-inner product of $f$ and $g$ as well as the evaluation of the distribution $f$ against the test function~$g$. The space $\ell^p$ is the Banach space of all sequences $(x_n)_{n\in \N}$ such that $\sum_{n\in \N}|x_n|^p<\infty$ and the corresponding norm is denoted by $\|\cdot \|_{\ell^p}$.

\section{Reconstruction operator and Besov modelled distributions}\label{sec:reconstruction}

The theory of regularity structures was introduced by M. Hairer in the seminal work~\cite{Hairer2014}. Gentle introductions to this novel theory can be found, for instance, in \cite{Friz2014}, \cite{Hairer2015} or \cite{Chandra2017}. We recall here for the sake of completeness the fundamental objects in suitable generality for the present paper. For the convenience of the reader our notation and definitions are mainly borrowed from \cite{Hairer2014,Hairer2015}. Let us start with the definition of a \emph{regularity structure}, of its \emph{models} and of \emph{modelled distributions}.

\begin{definition}[Definition~2.1 in \cite{Hairer2014}]
  A triplet $\mathcal{T}=(A,T,G)$ is called \textit{regularity structure} if it consists of the following objects:
  \begin{itemize}
    \item An \textit{index set} $A\subset\R$, which is locally finite and bounded from below, with $0\in A$.
    \item A \textit{model space} $T=\bigoplus_{\alpha\in A}T_{\alpha}$, which is a graded vector space with each $T_{\alpha}$ a Banach space and $T_{0}\approx\R$. Its unit vector is denoted by $\mathbf{1}$.
    \item A \textit{structure group} $G$ consisting of linear operators acting on $T$ such that, for every $\Gamma\in G$, every $\alpha\in A$, and every $a\in T_{\alpha}$ it holds
          \begin{align*}
            \Gamma a-a\in\bigoplus_{\beta\in A;\,\beta<\alpha}T_{\beta}.
          \end{align*}
          Moreover, $\Gamma\mathbf{1}=\mathbf{1}$ for every $\Gamma\in G$. 
  \end{itemize}
  For any $\tau\in T$ and $\alpha\in A$ we denote by $\mathcal{Q}_{\alpha}\tau$ the projection of $\tau$ onto $T_{\alpha}$ and set $\|\tau\|_{\alpha}:=\|\mathcal{Q}_{\alpha}\tau\|$.
\end{definition}

The basic idea behind the \emph{model space}~$T$ is to represent abstractly the information describing the ``jet'' or ``local expansion'' of a (generalized) function at any given point, i.e.~we prescribe a certain structure of local expansions of (generalized) functions, which we have in mind. Each $T_{\alpha}$ then corresponds to the ``monomials of degree~$\alpha$'' which are required to describe a (generalized) function locally ``of order~$\alpha$'' and the role of the structure group~$G$ is to translate coefficients from a local expansion around a given point into coefficients for an expansion around another point, such that the (generalized) function does not change. To make this interpretation clearer, we present the abstract polynomials as very simple example of a regularity structure. A more detailed discussion of this example can be found in Section~2.2 in~\cite{Hairer2014} or Section~13.2.1 in~\cite{Friz2014}. Alternatively, the reader might keep in mind the theory of (controlled) rough paths~\cite{Lyons1998,Gubinelli2004} as an example of a regularity structure, see Section~13.2.2 in~\cite{Friz2014}. 

\begin{example}\label{ex:polynomial}
  The polynomial regularity structure $\overline{\mathcal{T}}$ is given by the space of abstract polynomials in $d$ variables. In this case the index set is the set of natural numbers, that is $A=\mathbb{N}$. The model space $T=\mathbb{R}[X_{1},\dots,X_{d}]$ is indeed a graded vector space since it can be written as 
  \begin{equation*}
    T=\bigoplus_{\alpha\in A}T_{\alpha}\quad\text{with}\quad T_{\alpha}:=\mathrm{span}\,\{X^{k}\ :\ |k|=\alpha\}, 
  \end{equation*}
  where $\mathrm{span}\,\{X^{k}\, :\, |k|=\alpha\}$ is the space generated by all monomials of degree~$\alpha$ and $X^{k}:=X^{k_{1}}\dots X^{k_{d}}$. Here we use for simplicity the scaling $\s := (1,\dots,1)$. The canonical group action is $G\sim(\mathbb{R}^{d},+)$ which acts on $T$ via $\Gamma_{h}P(X):=P(X+h\mathbf{1})$ for every $h\in\mathbb{R}^{d}$ and $P(X)\in T$.
\end{example}

In order to associate to each ``abstract'' element in~$T$ a ``concrete'' (generalized) function or distribution on~$\mathbb{R}^{d}$, M. Hairer introduced the concept of \emph{models}. 

\begin{definition}[Definition~2.17 in \cite{Hairer2014}]\label{def:local model}
  Given a regularity structure $\mathcal{T}=(A,T,G)$, a \textit{model $(\Gamma,\Pi)$} on $\R^{d}$ is given by:
  \begin{itemize}
    \item A linear map $\Gamma\colon\R^{d}\times\R^{d}\to G$ such that $\Gamma_{x,y}\Gamma_{y,z}=\Gamma_{x,z}$ for every $x,y,z\in\R^{d}$ and $\Gamma_{x,x}=1$, where $1$ is the identity operator. 
    \item A collection of continuous linear maps $\Pi_{x}\colon T\to\mathcal{D}^{\prime}$ such that $\Pi_{y}=\Pi_{x}\circ\Gamma_{x,y}$ for every $x,y\in\R^{d}$.
  \end{itemize}
  Furthermore, for every compact set $\mathcal{K}\subset\mathbb{R}^{d}$ and for every constant $\gamma>0$, there exists a constant $C_{\gamma,\mathcal{K}}>0$ such that the bounds
  \begin{equation}\label{eq:local model}
    |\langle\Pi_{x}\tau,\phi_{x}^{\lambda}\rangle|\leq C_{\gamma,\mathcal{K}}\lambda^{\alpha}\|\tau\|_{\alpha}\quad\text{and}\quad\|\Gamma_{x,y}\tau\|_{\beta}\leq C_{\gamma,\mathcal{K}}\|x-y\|_{\s}^{\alpha-\beta}\|\tau\|_{\alpha}
  \end{equation}
  hold uniform over $(x,y)\in\mathcal{K}\times\mathcal{K}$, $\lambda\in(0,1]$, $\tau\in T_{\alpha}$ for $\alpha\leq\gamma$ and $\beta\leq\alpha$, and for all $\phi\in\mathcal{B}^{r}$ with $r>|\inf A|$. Additionally, we denote by $\|\Pi\|_{\gamma,\mathcal{K}}$ and $\|\Gamma\|_{\gamma,\mathcal{K}}$ the smallest constant for which the first and second inequality in~\eqref{eq:local model} holds, respectively.
\end{definition}

To illustrate the definition of a model, let us come back to Example~\ref{ex:polynomial}.

\begin{example}\label{ex:polymodel}
  Given the polynomial regularity structure $\overline{\mathcal{T}}=(A,T,G)$ from Example~\ref{ex:polynomial}, a corresponding model $(\Gamma,\Pi)$ can be defined by the concrete polynomials on $\mathbb{R}^{d}$. More precisely, the model $(\Gamma,\Pi)$ is given by the action 
  \begin{equation*}
    \Gamma_{x,y}P(X):=P(X+(x-y))\quad\text{and}\quad\Pi_{x}P(X):=P(\cdot-x),
  \end{equation*}
  for $X\in T$ and $x,y\in\mathbb{R}^{d}$.
\end{example}

The functions which can be described by the polynomial regularity structure and the model as introduced in Example~\ref{ex:polynomial} and \ref{ex:polymodel}, are the H\"older continuous functions. Indeed, take a function $f\in\mathcal{C}^{\gamma}$ for some~$\gamma>0$. Using the Taylor expansion of order $\lfloor\gamma\rfloor$, one can associate to $f$ a map $\hat{f}$ with values in $T$ via 
\begin{equation*}
  \hat{f}\colon\mathbb{R}^{d}\to\bigoplus_{\substack{\alpha\in A,\, \alpha<\gamma}}T_{\alpha}\subset T\quad\text{with }\quad\hat{f}(x):=\sum_{k\in\mathbb{N},\,|k|<\gamma}\frac{\partial^{k}f(x)}{k!}X^{k}.
\end{equation*}
Equipping a suitable subspace of functions of the form
\begin{equation*}
  \hat{f}\colon\mathbb{R}^{d}\to \bigoplus_{\substack{\alpha\in A,\, \alpha<\gamma}}T_{\alpha}
\end{equation*}
with the right topology, the map $f\mapsto\hat{f}$ turns out to be a one-to-one correspondence as proven in Lemma~2.12 in \cite{Hairer2014}.\smallskip

In the general context of regularity structures the ``H\"older continuous'' functions relative to a given model are the so-called ``modelled distributions''. This class of distributions locally ``looks like'' the distributions in the model. The precise definition reads as follows.

\begin{definition}[Definition~3.1 in \cite{Hairer2014}]\label{def:Holder modelled distributions}
  Let $\gamma\in\R$. The space of \textit{modelled distributions}~$\mathcal{D}^{\gamma}$ is given by all functions $f\colon\R^{d}\to T_{\gamma}^{-}$ such that for every compact set $\mathcal{K}\subset\R^{d}$ one has 
  \begin{equation*}
    \interleave f\interleave_{\gamma,\mathcal{K}}:=\sup_{x\in\mathcal{K}}\sup_{\alpha\in A_{\gamma}}\|f(x)\|_{\alpha}+\sup_{\substack{x,y\in\mathcal{K}\\ \|x-y\|_\s\leq1}}\sup_{\alpha\in A_{\gamma}}\frac{\|f(x)-\Gamma_{x,y}f(y)\|_{\alpha}}{\|x-y\|_\s^{\gamma-\alpha}}<\infty.
  \end{equation*}
  Here, we used the notation $T_{\gamma}^{-}:=\bigoplus_{\alpha\in A_{\gamma}}T_{\alpha}$, where one denotes $A_{\gamma}:=\{\alpha\in A\,:\,\alpha<\gamma\}$.
\end{definition}

Maybe the most fundamental result in Hairer's theory of regularity structures is the reconstruction theorem (Theorem~3.10 in \cite{Hairer2014}): for every $f\in\mathcal{D}^{\gamma}$ with $\gamma\in\R$ there exists a distribution $\mathcal{R}f$ with some (possibly negative) H\"older regularity on $\mathbb{R}^{d}$ such that $\mathcal{R}f$ ``looks like $\Pi_{x}f(x)$ near~$x$'' for every $x\in\mathbb{R}^{d}$. In other words, it is always possible to obtain from an abstract map $f\in\mathcal{D}^{\gamma}$ a concrete distribution $\mathcal{R}f$, which locally looks in some sense like~$f$.

\subsection{Reconstruction theorem for Besov spaces with negative regularity}

As already discussed in the Introduction, from a probabilist's point of view it seems more desirable to work with $L^{p}$-type norms instead of $L^{\infty}$-norms (as used to measure H\"older regularity as in Definition~\ref{def:Holder modelled distributions}) since this has the great advantage to give access to strong and highly developed techniques as stochastic integration. Therefore, we would like to work with a generalized version of the space of modelled distributions which is the analogue to classical Besov spaces. In this new setting it seems to be very natural and convenient to introduce models possessing global bounds instead of the local ones required in Definition~\ref{def:local model}. Therefore, following \cite{Hairer2017} we use a slight modified definition of models but we will come back to the original framework of regularity structures in Subsection~\ref{subsec:reconstruction for local models}.

\begin{definition}[Definition~2.8 in \cite{Hairer2017}]\label{def:global model}
  Given a regularity structure $\mathcal{T}=(A,T,G)$, a \textit{model $(\Gamma,\Pi)$} on $\R^{d}$ is given by:
  \begin{itemize}
    \item A linear map $\Gamma\colon\R^{d}\times\R^{d}\to G$ such that $\Gamma_{x,y}\Gamma_{y,z}=\Gamma_{x,z}$ for every $x,y,z\in\R^{d}$ and $\Gamma_{x,x}=1$, where $1$ is the identity operator. 
    \item A collection of continuous linear maps $\Pi_{x}\colon T\to\mathcal{D}^{\prime}$ such that $\Pi_{y}=\Pi_{x}\circ\Gamma_{x,y}$ for every $x,y\in\R^{d}$.
  \end{itemize}
  Furthermore, for every constant~$\gamma>0$, there exists a constant $C_{\gamma}>0$ such that the bounds
  \begin{equation}\label{eq:global model}
    |\langle\Pi_{x}\tau,\phi_{x}^{\lambda}\rangle|\leq C_{\gamma}\lambda^{\alpha}\|\tau\|_{\alpha}
    \quad\text{and}\quad
    \|\Gamma_{x,y}\tau\|_{\beta}\leq C_{\gamma}\|x-y\|_{\s}^{\alpha-\beta}\|\tau\|_{\alpha}
  \end{equation}
  hold uniform over $(x,y)\in\R^d\times\R^d$, $\lambda\in(0,1]$, $\tau\in T_{\alpha}$ for $\alpha\leq\gamma$ and $\beta\leq\alpha$, and for all $\phi\in\mathcal{B}^{r}$ with $r>|\inf A|$. Additionally, we denote by $\|\Pi\|:=\|\Pi\|_{\gamma}$ and $\|\Gamma\|:=\|\Gamma\|_{\gamma}$ the smallest constant for which the first and second inequality in~\eqref{eq:global model} holds, respectively. In the following, we drop the dependence on $\gamma$ whenever it is clear from the context. 
  
  Given two models $(\Gamma,\Pi)$ and $(\overline{\Gamma},\overline{\Pi})$ for the same regularity structure $\mathcal{T}=(A,T,G)$, a natural pseudo-metric is induced by
  \begin{align*}
    &\|\Pi-\overline{\Pi}\|:= \sup_{x\in\R^d}\sup_{\phi\in\mathcal{B}^{r}} \sup_{\lambda\in(0,1]} \sup_{\alpha \in A_{\gamma}} \sup_{\tau\in T_{\alpha}} \frac{ |\langle\Pi_{x}\tau-\overline{\Pi}_{x}\tau,\phi_{x}^{\lambda}\rangle|}{\lambda^{\alpha}\|\tau\|_{\alpha}},\\
    &\|\Gamma-\overline{\Gamma}\|:= \sup_{(x,y)\in\R^d\times\R^d}\sup_{\phi\in\mathcal{B}^{r}} \sup_{\lambda\in(0,1]} \sup_{\beta\leq\alpha \in A_{\gamma}} \sup_{\tau\in T_{\alpha}}\frac{ \|\Gamma_{x,y}\tau-\overline{\Gamma}_{x,y}\tau\|_{\beta}}{\|x-y\|_{\s}^{\alpha-\beta}\|\tau\|_{\alpha}},
  \end{align*}
  where we recall $A_\gamma := \{\alpha \in A \,:\, \alpha < \gamma \}$.
\end{definition}

For the rest of the subsection, we fix an arbitrary regularity structure $\mathcal{T}=(A,T,G)$ with an associated model $(\Gamma,\Pi)$ satisfying global bounds in the sense of Definition~\ref{def:global model}. Let us recall the scaling $\s=(\s_1,\dots,\s_d)$ on $\R^d$ and let $\mathcal{K} \subset \R^d$ be a Borel measurable set. For a measurable function $f\colon\R^{d}\to T_{\gamma}^{-}$, $\gamma\in \R$, $\alpha\in \mathcal{A}_\gamma$ and $p,q\in [1,\infty)$, we define 
\begin{equation*}
  \| |f|_{\alpha} \|_{L^p(\mathcal{K})} := \bigg(\int_{\mathcal{K}}|f(x)|_{\alpha}^{p}\dd x\bigg)^{\frac{1}{p}}, \quad 
  \| |f|_{\alpha} \|_{L^p}:= \|  |f|_{\alpha} \|_{L^p(\R^d)} 
\end{equation*}
and introduce the Besov norm 
\begin{align*} 
\begin{split}
  \interleave f \interleave_{\gamma,p,q,\mathcal{K}} 
  := & \sum_{\alpha \in A_\gamma}\| |f(x)|_{\alpha} \|_{L^p(\overline{\mathcal{K}})}\\
  & + \sum_{\alpha \in A_\gamma}\Big(\int_{h \in B(0,1)} \left\|\frac{|f(x+h) - \Gamma_{x+h,x}f(x)|_\alpha}{\|h\|^{\gamma - \alpha}_{\mathfrak{s}}}\right\|_{L^p(\mathcal{K})}^q\frac{\d h}{\|h\|^{|\mathfrak{s}|}_{\mathfrak{s}}}\Big)^{\frac{1}{q}}, 
\end{split}
\end{align*}
where $\overline{\mathcal{K}}$ stands for the $1$-fatting of the set $\mathcal{K}$ and where we used shortened notation by writing $\sum_{\alpha <\gamma}$ meaning $\sum_{\alpha \in A_\gamma}$. The norm $\interleave \cdot \interleave_{\gamma,p,q,\R^d}$ can be considered as the analogue to  classical Besov norms based on their definition using integrals, cf. for instance~\cite{Simon1990} or \cite{Triebel2010}.

\begin{definition}\label{def:Besov modelled distribution}
  Let $\gamma\in \R$ and $p,q\in [1,\infty)$. The \textit{Besov space}~$\mathcal{D}_{p,q}^{\gamma}$ consists of all functions $f\colon\R^{d}\to T_{\gamma}^{-}$ such that 
  $$
    \interleave f\interleave_{\gamma,p,q,\R^d}:=\interleave f\interleave_{\gamma,p,q}<\infty.
  $$
  Further, we write~$\mathcal{D}_{p,q}^{\gamma} (\mathcal{K})$ for the space of all functions $f\colon\overline{\mathcal{K}}\to T_{\gamma}^{-}$ such that $\interleave f\interleave_{\gamma,p,q,\mathcal{K}}<\infty$, for a compact set $\mathcal{K}\subset\R^d$.
\end{definition}

\begin{remark}
  We would like to point out that the general Besov spaces~$\mathcal{D}_{p,q}^{\gamma}$ were first defined by Hairer and Labb\'e~\cite{Hairer2017}.
  In an early version of the present article only the special case of Sobolev--Slobodeckij spaces (also called fractional Sobolev spaces) of modelled distributions were introduced, which appears to be sufficient in many situations for stochastic integration. The Sobolev--Slobodeckij spaces correspond to the Besov spaces~$\mathcal{D}_{p,p}^{\gamma}$. Another different $L^p$-counterpart of the original space~$\mathcal{D}^\gamma$ of modelled distributions were considered in \cite{Hairer2018}, which is related to Nikolskii spaces or in other words to Besov spaces~$\mathcal{D}^\gamma_{p,\infty}$, see Definition~2.9 in~\cite{Hairer2018}.
\end{remark}

\begin{remark}
  The Besov spaces~$\mathcal{D}_{p,q}^{\gamma}$ enjoy similar embedding properties as the well-known embedding theorems for classical Besov spaces.  In particular, the Besov spaces are nested in their regularity parameter, that is  $\mathcal{D}_{p,q}^{\gamma_{1}}\subset\mathcal{D}_{p,q}^{\gamma_{2}}$ for $\gamma_{1}\geq\gamma_{2}$ and $p,q\in[1,\infty)$. 
  For more sophisticated embedding results we refer to Section~4 in \cite{Hairer2017}.
\end{remark}

Because Hairer's reconstruction operator maps modelled distributions possessing some H\"older regularity to generalized functions again possessing certain H\"older regularity, one expects a similar result also for modelled distributions with Besov regularity. Here we focus on the Besov spaces with negative regularity since this suffices for our reconstruction theorem, see Theorem~\ref{thm:reconstruction negative} below. For the general definition we refer to Definition~2.1 in \cite{Hairer2017} and to introductory books about Besov spaces as, for instance, to \cite{Triebel2010} or \cite{Bahouri2011}. 

\begin{definition}[Definition~2.1 in \cite{Hairer2017}]
  Let $\alpha <0 $, $p,q\in [1,\infty)$ and $r\in \N$ such that $r>|\alpha|$. The \textit{Besov space}~$\mathcal{B}^{\alpha}_{p,q}:=\mathcal{B}^{\alpha}_{p,q}(\R^d)$ is the space of all distributions~$\xi$ on $\R^d$ such that 
  \begin{equation*}
    \bigg\| \Big\| \sup_{\eta\in \mathcal{B}^r(\R^d)} \frac{|\langle \xi , \eta^\lambda_x \rangle|}{\lambda^\alpha}  \Big\|_{L^p} \bigg\|_{L^q_\lambda} <\infty,
  \end{equation*}
  where 
  $$
    \eta^\lambda_x(y):= \lambda^{-|\s|}\eta (\lambda^{-\s_1}(y_1-x_1),\dots,\lambda^{-\s_d}(y_d-x_d))
  $$
  for $\lambda\in (0,1]$, $x=(x_1,\dots,x_d)\in \R^d$ and $y=(y_1,\dots,y_d)\in \R^d$.
\end{definition}

The next theorem presents Hairer's celebrated reconstruction theorem for the Besov space~$\mathcal{D}_{p,q}^{\gamma}$ with negative regularity~$\gamma$. The reconstruction theorem for the Besov spaces~$\mathcal{D}_{p,q}^{\gamma}$ with positive regularity~$\gamma$ can be found in Theorem~3.1 in \cite{Hairer2017}. While the reconstruction operator is unique in the later case, this uniqueness is lost for the reconstruction operator acting on modelled distributions with negative regularity.   

\begin{theorem}\label{thm:reconstruction negative} 
  Suppose that $\alpha := \min A < \gamma < 0 $, and $f\in \mathcal{D}^\gamma_{p,q}$. If $q = \infty $, let $\bar{\alpha} = \alpha$, otherwise take $\bar{\alpha} < \alpha$. Then, there exists a continuous linear operator $\mathcal{R} \colon \mathcal{D}^\gamma_{p,q} \rightarrow \mathcal{B}^{\bar{\alpha}}_{p,q}$ such that 
  \begin{equation}\label{eq:resonstruction bound}
    \left\lVert \Big\| \sup_{\eta \in \mathcal{B}^r} \frac{|\langle \mathcal{R}f - \Pi_xf(x), \eta^\lambda_x \rangle|}{\lambda^\gamma} \Big\|_{L^p} \right\rVert_{L^q_\lambda} \lesssim \|\Pi\|(1 + \|\Gamma\|)\vertiii{f}_{\gamma,p,q},
  \end{equation}
  uniformly over all $f \in \mathcal{D}^\gamma_{p,q}$ and all models $(\Pi, \Gamma)$ in the sense of Definition~\ref{def:global model}.
  
  Furthermore, let $(\overline{\Pi},\overline{\Gamma})$ be another model in the sense of Definition~\ref{def:global model} for $\mathcal{T}=(A,T,G)$ and denote by $\overline{\mathcal{D}}^\gamma_{p,q}$ the Besov space of modelled distributions w.r.t. $(\overline{\Pi},\overline{\Gamma})$. Then, there exist continuous maps $\mathcal{R}\colon\mathcal{D}^\gamma_{p,q} \rightarrow \mathcal{B}^{\bar{\alpha}}_{p,q}$ and $\overline{\mathcal{R}}\colon\overline{\mathcal{D}}^\gamma_{p,q} \rightarrow \mathcal{B}^{\bar{\alpha}}_{p,q}$ which satisfy the reconstruction bound~\eqref{eq:resonstruction bound} for $(\Pi,\Gamma)$ and $(\overline{\Pi},\overline{\Gamma})$, respectively, and 
  \begin{align}\label{eq:reconstruction bound for two models}
  \begin{split}
    \left\lVert \Big\| \sup_{\eta \in \mathcal{B}^r} \frac{|\langle \mathcal{R}f - \overline{\mathcal{R}}g- \Pi_xf(x) + \overline{\Pi}_xg(x), \eta^\lambda_x \rangle|}{\lambda^\gamma} \Big\|_{L^p} \right\rVert_{L^q_\lambda} &\\
    &\hspace{-3.3cm} \lesssim_{\Pi,\Gamma,\overline{\Pi},\overline{\Gamma}}\vertiii{f;g}_{\gamma,p,q} + \vertiii{g}_{\gamma,p,q}(\| \Pi - \overline{\Pi}\|+\|\Gamma - \overline{\Gamma}\|)
  \end{split}
  \end{align}
  for $f\in\mathcal{D}^\gamma_{p,q}$  and $g\in\overline{\mathcal{D}}^\gamma_{p,q}$, where 
  \begin{align*}
    \vertiii{f;g}_{\gamma,p,q} &=\max_{\zeta \in \mathcal{A}_\gamma} \Big(\Big\||f(x) - g(x)|_\zeta \Big\|_{L^p} \\ 
    &\hspace{0.2cm}+\Big(\int_{B(0,1)} \bigg\|\frac{|f(x+h) - g(x+h) - \Gamma_{x+h,x}f(x) 
  	 +\overline{\Gamma}_{x+h,x}g(x)|_\zeta}{\|h\|_{\s}^{\gamma - \zeta}} \Big\|_{L^p(\d x)}^q\,\frac{\d h}{\|h\|_{\s}^{|\s|}}\Big)^{\frac{1}{q}}\bigg).
  \end{align*}
\end{theorem}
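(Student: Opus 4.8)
The plan is to follow the classical strategy for Hairer's reconstruction theorem in the negative-regularity regime, adapted to the Besov ($L^p$-$L^q_\lambda$) topology, building on the wavelet-analysis approach of Hairer and the arguments in~\cite{Hairer2017} for the positive case. First I would fix a compactly supported wavelet basis: a scaling function $\varphi$ and wavelets $\psi^{(i)}$ generating a multiresolution analysis of $L^2(\R^d)$ adapted to the scaling $\s$, with $\varphi, \psi^{(i)} \in \mathcal{C}^r_0$ for $r$ large, and annihilating polynomials of scaled degree up to $r$. Writing $\varphi^n_x$ and $\psi^{(i),n}_x$ for the rescaled/translated basis at dyadic scale $2^{-n}$, the candidate reconstruction is defined scale by scale: set $\mathcal{R}_n f := \sum_{x \in \Lambda_n} \langle \Pi_y f(y), \varphi^n_x\rangle|_{y=x}\,\varphi^n_x$ (the value being assigned at the grid point $x$), and then $\mathcal{R} f := \lim_{n\to\infty}\mathcal{R}_n f$ provided the limit exists in $\mathcal{B}^{\bar\alpha}_{p,q}$. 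The difference $\mathcal{R}_{n+1}f - \mathcal{R}_n f$ expands along the wavelets $\psi^{(i),n}$, and the key quantity to estimate is the wavelet coefficient $\langle \mathcal{R}_{n+1}f - \mathcal{R}_nf, \psi^{(i),n}_x\rangle$, which one rewrites using $\Pi_y = \Pi_x \circ \Gamma_{x,y}$ and the model bounds~\eqref{eq:global model} so that it is controlled by $\|\Pi\|(1+\|\Gamma\|)$ times a local increment of $f$ of the form appearing in the second term of $\vertiii{f}_{\gamma,p,q}$.

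The heart of the proof is then to sum these scale-by-scale contributions in the Besov norm. For each fixed $n$ one has a bound, pointwise in the grid, of the type $|\langle \mathcal{R}_{n+1}f - \mathcal{R}_nf, \psi^{(i),n}_x\rangle| \lesssim \|\Pi\|(1+\|\Gamma\|)\, 2^{-n\gamma - n|\s|/2}\sum_{\alpha<\gamma} 2^{-n(\gamma-\alpha)}\cdot(\text{local $f$-increment at scale } 2^{-n})$; raising to the $p$-th power, summing over the grid points (which is comparable to an $L^p$ integral by the usual equivalence of discrete and continuous norms for functions that are roughly constant at scale $2^{-n}$), and then organizing the sum over $n$ as an $\ell^q$/$L^q_\lambda$ sum, I would recover exactly $\vertiii{f}_{\gamma,p,q}$ on the right. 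This is where the choice $\bar\alpha \le \alpha$ (strict when $q<\infty$) enters: because $\gamma<0$, the series $\sum_n 2^{-n(\gamma-\bar\alpha)}(\cdots)$ only converges in $\ell^q$ after trading a little regularity, whereas for $q=\infty$ one can stay at $\bar\alpha = \alpha$. One also needs the separate, easier estimate that $\mathcal{R}_0 f$ itself lies in $\mathcal{B}^{\bar\alpha}_{p,q}$, controlled by $\|\Pi\|\,\vertiii{f}_{\gamma,p,q}$, using the first term of the Besov norm and the first bound in~\eqref{eq:global model}. Combining, $\mathcal{R}f = \mathcal{R}_0 f + \sum_{n\ge 0}(\mathcal{R}_{n+1}f - \mathcal{R}_n f)$ converges in $\mathcal{B}^{\bar\alpha}_{p,q}$, is linear in $f$, and the bound~\eqref{eq:resonstruction bound} follows by testing against arbitrary $\eta \in \mathcal{B}^r$ and re-expanding $\eta^\lambda_x$ in the wavelet basis (a standard but somewhat technical step: one splits the scales above and below $\lambda$ and uses that $\eta$ has enough vanishing moments and smoothness).

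For the two-model statement I would run the same construction for both $(\Pi,\Gamma)$ and $(\overline\Pi,\overline\Gamma)$, producing $\mathcal{R}$ and $\overline{\mathcal{R}}$, and then estimate the wavelet coefficients of $(\mathcal{R}_{n+1}-\mathcal{R}_n)f - (\overline{\mathcal{R}}_{n+1}-\overline{\mathcal{R}}_n)g$ by the telescoping/triangle-inequality trick: write $\Pi_y f(y) - \overline\Pi_y g(y)$ and insert and subtract mixed terms $\Pi_x \Gamma_{x,y}(f(y)-g(y))$ and $(\Pi_x - \overline\Pi_x)\overline\Gamma_{x,y}g(y)$ and $\Pi_x(\Gamma_{x,y}-\overline\Gamma_{x,y})g(y)$, so that each piece is controlled either by a joint increment of $f-g$ (giving $\vertiii{f;g}_{\gamma,p,q}$) or by $\|\Pi-\overline\Pi\|+\|\Gamma-\overline\Gamma\|$ times an increment of $g$ alone (giving $\vertiii{g}_{\gamma,p,q}$ times the model distance). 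Summing over scales exactly as before yields~\eqref{eq:reconstruction bound for two models}. The main obstacle throughout is bookkeeping: keeping the $L^p$-in-space and $L^q$-in-scale norms correctly interleaved while passing between the discrete wavelet-coefficient formulation and the continuous integral formulation, and handling the borderline summability in $n$ that forces $\bar\alpha<\alpha$ for $q<\infty$; the underlying algebra with $\Pi_y = \Pi_x\Gamma_{x,y}$ and the model bounds is essentially the same as in Hairer's original proof.
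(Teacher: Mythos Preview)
Your overall architecture---wavelet expansion, scale-by-scale coefficient estimates, splitting the test function $\eta^\lambda_x$ into scales above and below $\lambda$, and the telescoping/triangle decomposition for the two-model bound---matches the paper's. But there is a genuine gap in your definition of the approximations: you set $\mathcal{R}_n f := \sum_{x \in \Lambda_n} \langle \Pi_x f(x), \varphi^n_x\rangle\,\varphi^n_x$, evaluating $f$ \emph{pointwise} at the grid points $x\in\Lambda_n$. In Hairer's original H\"older setting this is fine, but here $f \in \mathcal{D}^\gamma_{p,q}$ is only an $L^p$-object in the space variable, so $f(x)$ on the measure-zero set $\Lambda_n$ is not well-defined and your wavelet coefficients do not even parse. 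This is precisely the place where the Besov setting forces a modification of the original construction, and the paper addresses it (following \cite{Hairer2017}) by replacing $f(x)$ with the local average $\overline{f}^n(x) := \int_{B(x,2^{-n})} \Gamma_{x,y} f(y)\,\d y$ and then defining $\mathcal{R}f$ directly through the wavelet coefficients $\langle \Pi_x \overline{f}^n(x), \psi^n_x\rangle$. With this average in place, Jensen's inequality on the ball $B(x,2^{-n})$ converts the pointwise model bounds into the $L^p$ estimates you describe; without it, the very first step fails.

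A secondary point: your explanation of where the loss $\bar\alpha < \alpha$ enters is off. The series $\sum_n 2^{-n(\gamma-\bar\alpha)}$ you mention has a positive exponent (since $\gamma>\alpha>\bar\alpha$) and is harmless. The actual bottleneck is the membership $\mathcal{R}f \in \mathcal{B}^{\bar\alpha}_{p,q}$: the wavelet-coefficient bound only produces control of the form $2^{-n\beta-n|\s|/2}$ with $\beta\in A_\gamma$, and summing $2^{n(\bar\alpha-\beta)}$ in $\ell^q$ over $n$ for the worst case $\beta=\alpha=\min A$ forces $\bar\alpha<\alpha$ when $q<\infty$ (and allows $\bar\alpha=\alpha$ when $q=\infty$). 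The reconstruction bound~\eqref{eq:resonstruction bound} itself does not require this loss.
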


The proof works similarly to the one of the original reconstruction theorem (Theorem~2.10 in \cite{Hairer2015}). However, the relevant estimates need to be generalized to the new $L^p$-setting provided by the Besov space~$\mathcal{D}_{p,q}^{\gamma}$, which, in particular, requires a new/modified definition of the reconstruction operator, cf.~\eqref{eq:reconstruction operator}. Before proving Theorem~\ref{thm:reconstruction negative}, some preliminary discussion is in order: \smallskip

First, we would like to remark that given $f \in \mathcal{D}^{\gamma}_{p,q}$, for any $C>0$ and $\zeta \in A_\gamma$ it holds that 
$$
  \bigg(\int_{h \in B(0,C)} \Big\|\frac{|f(x+h) - \Gamma_{x+h,x}f(x)|_\zeta}{\|h\|_{\s}^{\gamma - \zeta}} \Big\|_{L^p(\d x)}^q\,\frac{\d h}{\|h\|_{\s}^{|\s|}}\bigg)^{\frac{1}{q}} 
  \lesssim (1 + \|\Gamma\|)\vertiii{f}_{\gamma,p,q}.
$$
Indeed, if $C \in (0,1]$, then the above inequality is trivial due to the definition of $\vertiii{f}_{\gamma,p,q}$. Now suppose that $C = 2$. Then we note that
\begin{align*}
  \bigg(\int_{h \in B(0,2)} \Big\| & \frac{|f(x+h) - \Gamma_{x+h,x}f(x)|_\zeta}{\|h\|_{\s}^{\gamma - \zeta}} \Big\|_{L^p(\d x)}^q\,\frac{\d h}{\|h\|_{\s}^{|\s|}}\bigg)^{\frac{1}{q}}\\
  & \qquad \lesssim \bigg(\int_{h \in B(0,1)} \Big\|\frac{|f(x+2h) - \Gamma_{x+2h,x}f(x)|_\zeta}{\|h\|_{\s}^{\gamma - \zeta}} \Big\|_{L^p(\d x)}^q\,\frac{\d h}{\|h\|_{\s}^{|\s|}}\bigg)^{\frac{1}{q}}.
\end{align*}
Since  
\begin{align*}
  |f(x+2h) -& \Gamma_{x+2h,x}f(x)|_\zeta \\
  &\leq |f(x+2h) - \Gamma_{x+2h,x+h}f(x+h)|_\zeta + |\Gamma_{x+2h,x+h}(f(x+h) - \Gamma_{x+h,x}f(x))|_\zeta,
\end{align*}
the triangle inequality yields that
\begin{align*}
  & \hspace{-1cm}\bigg(\int_{h \in B(0,1)} \Big\|\frac{|f(x+2h) - \Gamma_{x+2h,x}f(x)|_\zeta}{\|h\|_{\s}^{\gamma - \zeta}} \Big\|_{L^p(\d x)}^q\,\frac{\d h}{\|h\|_{\s}^{|\s|}}\bigg)^{\frac{1}{q}} \\
  \lesssim & \bigg(\int_{h \in B(0,1)} \Big\|\frac{|f(x+2h) - \Gamma_{x+2h,x+h}f(x+h)|_\zeta}{\|h\|_{\s}^{\gamma - \zeta}} \Big\|_{L^p(\d x)}^q\,\frac{\d h}{\|h\|_{\s}^{|\s|}}\bigg)^{\frac{1}{q}} \\
  &+ \bigg(\int_{h \in B(0,1)} \Big\|\frac{| \Gamma_{x+2h,x+h}(f(x+h) - \Gamma_{x+h,x}f(x))|_\zeta}{\|h\|_{\s}^{\gamma - \zeta}} \Big\|_{L^p(\d x)}^q\,\frac{\d h}{\|h\|_{\s}^{|\s|}}\bigg)^{\frac{1}{q}}.
\end{align*}
Clearly, the first term in the right hand side of the above inequality is bounded by $\vertiii{f}_{\gamma,p,q}$. On the other hand, since
$$
  |\Gamma_{x+2h,x+h}(f(x+h) - \Gamma_{x+h,x}f(x)) |_\zeta \leq \|\Gamma\|\sum_{\gamma >\beta \ge \zeta}|f(x+h) - \Gamma_{x+h,x}f(x)|_\beta\|h\|_{\s}^{\beta - \zeta},
$$
the second term in the right hand side of the above inequality is bounded by 
$$
  \|\Gamma\|\sum_{ \gamma > \beta \ge \zeta}\bigg(\int_{h \in B(0,1)} \Big\|\frac{|f(x+h) - \Gamma_{x+h,x}f(x)|_\beta}{\|h\|_{\s}^{\gamma - \beta}} \Big\|_{L^p(\d x)}^q\,\frac{\d h}{\|h\|_{\s}^{|\s|}}\bigg)^{\frac{1}{q}} \lesssim \|\Gamma\|\vertiii{f}_{\gamma,p,q}.
$$
Hence, summing both terms up, we obtain the desired estimate for $C \in (1,2]$. Then we can easily extend this result to any $C > 2$.\smallskip

Secondly, we need some elements of wavelet analysis for the proof of Theorem~\ref{thm:reconstruction negative}. For more detailed discussions we refer to Section~3.1 in \cite{Hairer2014} and the works of Meyer~\cite{Meyer1992} and of Daubechies~\cite{Daubechies1988}. 

Let $r>0$ be a finite real number. We consider a wavelet basis associated to a scaling function $\phi\colon\mathbb{R}\to\mathbb{R}$ with the following four properties: 
\begin{enumerate}[label=(\roman*)]
  \item The function $\phi$ is in $\mathcal{C}_{0}^{r}$. 
  \item For every polynomial $P$ of degree at most~$r$, one has 
        $$
          \sum_{y\in\mathbb{Z}}\int_{\R} P(z)\phi(z-y) \dd z \,\phi(x-y) =P(x), \quad x\in\mathbb{R}.
        $$
  \item For every $y\in\mathbb{Z}^{d}$ one has $\int_{\mathbb{R}}\phi(x)\phi(x-y)\dd x=\delta_{y,0}$.
  \item There exist coefficients $(a_{k})_{k\in\mathbb{Z}}$ with only finitely many non-zero values such that 
        \begin{equation*}
          \phi(x)=\sum_{k\in\mathbb{Z}}a_{k}\phi(2x-k),\quad x\in \R. 
        \end{equation*}
\end{enumerate}
The existence of such a function $\phi$ can be found in Theorem~13.25 in~\cite{Friz2014} and was originally ensured by Daubechies~\cite{Daubechies1988}. We define 
\begin{equation*}
  \phi_{x}^{n}(y):= \prod_{i=1}^d 2^{\frac{n\s_i}{2}} \phi(2^{n\s_i}(y_i-x_i))
\end{equation*}
for $x=(x_1,\dots,x_d),y=(y_1,\dots,y_d)\in\R^d$ and an $\s$-scaled grid of mesh size~$2^{-n}$ by
\begin{equation*}
  \Lambda_n := \Big \{ (2^{-n \s_1}k_1, \dots, 2^{-n\s_d}k_d)\,:\, k_i \in \Z ,\, i=1,\dots,d\Big\}.
\end{equation*} 
The linear span of $(\phi_{x}^{n})_{x\in\Lambda^{n}}$ is denoted by $V_{n}\subset\mathcal{C}^{r}$ and the $L^{2}$-orthogonal complement of $V_{n-1}$ in $V_{n}$ is denoted by $\hat{V}_{n}$. The subspaces~$\hat{V}_{n}$ can be likewise described than the subspaces~$V_{n}$. Indeed, it is a standard fact coming from wavelet analysis~\cite{Meyer1992}: there exists a finite set~$\Psi$ of compactly supported functions in $\mathcal{C}^r$, that annihilate all polynomials of degree at most~$r$, and such that for every $n\geq 0$,
the set 
\begin{equation*}
  \{\phi_{x}^{n}\,:\, x\in\Lambda_{n}\}\cup\{ \psi_{x}^{m}\,:\,\psi \in \Psi,\, x\in\Lambda_{m},\, m\geq n\}
\end{equation*}
constitute an orthonormal basis of~$L^{2}$. Notice that the subspace $\hat{V}_{n+1}\subset L^2$ is generated by $\{ \psi_{x}^{n}\,:\,\psi \in \Psi,\, x\in\Lambda_{n}\}$.

\begin{proof}[Proof of Theorem~\ref{thm:reconstruction negative}] 
  Let $r\in \N$ such that $r > |\alpha|$ and assume that $\varphi$ and $\psi \in \Psi$ are the father and mother wavelet(s) in $\mathcal{C}^r_0$, respectively, with the above discussed properties. 
  
  In view of \cite[(3.38)]{Hairer2014}, a natural choice for $\mathcal{R}f$ is 
  \begin{equation}\label{eq:reconstruction operator}
    \mathcal{R} f := \sum_{n\in \N} \sum_{x\in \Lambda_n} \sum_{\psi \in \Psi}  \langle \Pi_x \overline{f}^n (x), \psi^n_x \rangle   \psi^n_x + \sum_{x \in \Lambda_0}\langle \Pi_x \overline{f}^0 (x), \varphi^0_x   \rangle   \varphi^0_x,
  \end{equation}
  where $\overline{f}^n$ is given by
  \begin{equation*}
    \overline{f}^n (x):= \int_{B(x,2^{-n})} 2^{n |\s|} \Gamma_{x,y} f(y)\dd y, \quad x \in \Lambda_n,
  \end{equation*}
  cf.~\cite[(2.8)]{Hairer2017}.\smallskip
  
  \textit{Step~1:} Let us first verify that the so-defined $\mathcal{R}f$ belongs to $\mathcal{B}^{\bar{\alpha}}_{p,q}$ for any $\bar{\alpha} < \alpha$. We set for every $n \geq 0$, $x \in \Lambda_n$ and $\psi \in \Psi$ a real number
  $$
    a^{n,\psi}_x := \langle \mathcal{R}f, \psi^n_x \rangle = \langle \Pi_x\overline{f}^n(x), \psi^n_x \rangle,
  $$
  and for $x \in \Lambda_0$, $b^0_x := \langle \mathcal{R}f, \varphi_x^0 \rangle = \langle \Pi_x\overline{f}^0(x), \varphi_x^0 \rangle$. Invoking \cite[Proposition~2.4]{Hairer2017}, it suffices to show that 
  $$
    \left\lVert \Big\|\frac{a^{n,\psi}_x}{2^{-n\frac{|\s|}{2}- n\bar{\alpha}}}\Big\|_{\ell^p_n} \right\rVert_{\ell^q} < \infty 
    \quad\text{and}\quad
    \Big\| b^0_x \Big\|_{\ell^p_0} < \infty,
  $$
  where $\ell^p_n$ stand for the Banach space of all sequences $u(x)$, $x\in \Lambda_n$, such that 
  \begin{equation*}
    \| u(x) \|_{\ell^p_n}:= \Big (\sum_{x\in \Lambda_n} 2^{-n |\s|} |u(x)|^p \Big)^{\frac{1}{p}}<\infty.
  \end{equation*}
  To this end, we remark that by the construction of $\overline{f}^n$
  $$
    |a^{n,\psi}_x|  \leq \int_{B(x,2^{-n})} 2^{n|\s|}|\langle \Pi_x\Gamma_{x,y}f(y), \psi^n_x \rangle| \dd y.
  $$
  Then, since $\psi^n_x = 2^{-n\frac{|\s|}{2}}\psi^{2^{-n}}_x$, from the definition of model we deduce that
  \begin{align*}
    \Big|\langle \Pi_x\Gamma_{x,y}f(y), \psi^n_x \rangle \Big| &\lesssim \| \Pi \|\sum_{\zeta \in A_\gamma} |\Gamma_{x,y}f(y) |_\zeta \, 2^{-n\zeta-n \frac{|\s|}{2}} \\
    &\lesssim \| \Pi \|\| \Gamma \|\sum_{\zeta \in A_\gamma}\sum_{\gamma >\beta \geq \zeta} | f(y) |_\beta \, 2^{-n\beta - n\frac{|\s|}{2}}
  \end{align*}
  uniformly over all $n \geq 0$, all $x \in \Lambda_n$ and all $y \in B(x,2^{-n})$. As a consequence, we get 
  $$
    \Big|\frac{a^{n,\psi}_x}{2^{-n\frac{|\s|}{2}- n\bar{\alpha}}}\Big| \lesssim \sum_{\zeta \in A_\gamma}\sum_{\gamma >\beta \geq \zeta}\int_{B(x,2^{-n})} |f(y)|_\beta \, 2^{n|\s|}2^{n(\bar{\alpha} - \beta)} \dd y,
  $$
  which in turn implies that
  \begin{align*}
    \Big\|\frac{a^{n,\psi}_x}{2^{-n\frac{|\s|}{2}- n\bar{\alpha}}}\Big\|_{\ell^p_n}  &= \Big( \sum_{x \in \Lambda_n}2^{-n|\s|}\Big|\frac{a^{n,\psi}_x}{2^{-n\frac{|\s|}{2}- n\bar{\alpha}}}\Big|^p \Big)^{\frac{1}{p}} \\
    &\lesssim \sum_{\beta \in A_\gamma}\Big( \sum_{x \in \Lambda_n} 2^{-n|\s|}\Big(\int_{B(x,2^{-n})} |f(y) |_\beta 2^{n|\s|} \,2^{n(\bar{\alpha} - \beta)} \dd y \Big)^p  \Big)^{\frac{1}{p}} \\
    &\lesssim \sum_{\beta \in A_\gamma}\Big( \sum_{x \in \Lambda_n} 2^{-n|\s|}\int_{B(x,2^{-n})} \Big(|f(y)|_\beta \, 2^{n(\bar{\alpha} - \beta)}\Big)^p 2^{n|\s|} \dd y  \Big)^{\frac{1}{p}} \\
    &\lesssim \sum_{\beta \in A_\gamma}\Big( \int_{\R^d} | f(y)|^p_\beta \dd y\Big)^{\frac{1}{p}}2^{n(\bar{\alpha} - \beta)},
  \end{align*}
  where we used Jensen's inequality in the third line for the finite measure $2^{n|\s|}\dd y |_{B(x,2^{-n})}$ and the convex function $z \mapsto z^p$. Since $\alpha = \min A$ and $\bar{\alpha} < \alpha$, for all $\beta \in A_\gamma$, one has $\bar{\alpha} - \beta < 0$ and therefore $\sum_{n \geq 0}2^{n(\bar{\alpha} - \beta)} \lesssim 1$. It follows that
  \begin{align*}
    \left\lVert \Big\|\frac{a^{n,\psi}_x}{2^{-n\frac{|\s|}{2}- n\bar{\alpha}}}\Big\|_{\ell^p_n} \right\rVert_{\ell^q} &\lesssim \Big(\sum_{n \geq 0} \Big(\sum_{\beta \in A_\gamma} \Big\| | f(y) |_\beta \Big\|_{L^p} 2^{n(\bar{\alpha}-\beta)}  \Big)^q \Big)^{\frac{1}{q}} \\
    & \lesssim \sum_{\beta \in \mathcal{A}_\gamma} \Big\| | f(y) |_\beta \Big\|_{L^p}\bigg(\sum_{n \geq 0}2^{n(\bar{\alpha} - \beta)q}\bigg)^\frac{1}{q}  \\
    &\lesssim \vertiii{f}_{\gamma,p,q} < \infty,
  \end{align*}
  as claimed. Similarly we can also prove that $\| b^0_x \|_{\ell^p_0} \lesssim \vertiii{f}_{\gamma,p,q} < \infty$. 
  
  In the particular case $q = \infty$, we have 
  $$
   \left\lVert \Big\|\frac{a^{n,\psi}_x}{2^{-n\frac{|\s|}{2}- n\alpha}}\Big\|_{\ell^p_n} \right\rVert_{\ell^\infty} \lesssim \sup_{n \ge 0} \sum_{\beta \in A_\gamma}\Big( \int_{\R^d} | f(y)|^p_\beta \dd y\Big)^{\frac{1}{p}}2^{n(\alpha - \beta)} \lesssim \vertiii{f}_{\gamma,p,q}
  $$
  due to the relation that $\alpha - \beta \le 0$ for all $\beta \in A_\gamma$. Hence, we have $\mathcal{R}f \in \mathcal{B}^\alpha_{p,\infty}$ in this case.\smallskip
  
  \textit{Step~2}: We will establish the reconstruction bound~\eqref{eq:resonstruction bound} for $\mathcal{R}f$. For fixed $x \in \R^d$, $\lambda \in (0,1]$ and $\eta \in \mathcal{B}^r$, we have
  \begin{equation*}
    \langle \mathcal{R}f - \Pi_xf(x), \eta^\lambda_x\rangle = \sum_{\psi \in \Psi}\sum_{n \geq 0} \sum_{y \in \Lambda_n}\langle \mathcal{R}f - \Pi_xf(x), \psi^n_y\rangle\langle\psi^n_y,\eta^\lambda_x\rangle + \sum_{y \in \Lambda_0}\langle \mathcal{R}f - \Pi_xf(x), \varphi_y\rangle \langle \varphi_y, \eta^\lambda_x\rangle,
  \end{equation*}
  where 
  \begin{align*}
    \langle \mathcal{R}f - \Pi_xf(x), \psi^n_y\rangle &= \langle \Pi_y\overline{f}^n(y) - \Pi_xf(x), \psi^n_y \rangle \\
    &= \int_{B(y,2^{-n})} 2^{n|\s|}\langle \Pi_y(\Gamma_{y,z}f(z) - \Gamma_{y,x}f(x)), \psi^n_y \rangle \dd z,
  \end{align*}
  and the same expression holds for $\langle \mathcal{R}f - \Pi_xf(x), \varphi_y\rangle$. It follows that
  \begin{align}\label{eq:estimate of the coefficient of one model}
    \Big| \langle \mathcal{R}f - \Pi_x &f(x), \psi^n_y\rangle \Big| 
    \leq \int_{B(y,2^{-n})} 2^{n|\s|} \Big|\langle \Pi_y(\Gamma_{y,z}f(z) - \Gamma_{y,x}f(x)), \psi^{2^{-n}}_y\rangle\Big|  \dd z \, 2^{-n\frac{|\s|}{2}} \nonumber\\
    &\lesssim \|\Pi\| \sum_{\zeta \in A_\gamma} \int_{B(y,2^{-n})} 2^{n|\s|}  |\Gamma_{y,z}f(z) - \Gamma_{y,x}f(x)|_\zeta \dd z\, 2^{-n\zeta-n\frac{|\s|}{2}}\nonumber \\
    &\lesssim \|\Pi\|\|\Gamma\|\sum_{\zeta \in A_\gamma}\sum_{\gamma >\beta \geq \zeta}\int_{B(y,2^{-n})} 2^{n|\s|}  |f(z) - \Gamma_{z,x}f(x)|_{\beta} \|z - y \|_{\s}^{\beta - \zeta} \dd z\, 2^{-n\zeta-n\frac{|\s|}{2}} \nonumber\\
    &\lesssim \sum_{\zeta \in A_\gamma}\sum_{\gamma >\beta \geq \zeta}\int_{B(y,2^{-n})} 2^{n|\s|}  |f(z) - \Gamma_{z,x}f(x)|_\beta \dd z \,2^{-n\beta-n\frac{|\s|}{2}},
  \end{align}
  uniformly over all $x \in \R^d$, $n \geq 0$ and $y \in \Lambda_n$.
  
  As in the proof of Theorem~3.1 in~\cite{Hairer2017}, for a given $\lambda \in (0,1]$, there exists a $n_0 \geq 0$ such that $\lambda \in (2^{-n_0-1},2^{-n_0}]$. Let $\| \cdot \|_{L^q_{n_0}(\d\lambda)}$ denote the $L^q$-norm with respect to the finite measure (with the total mass $\ln2$) $\lambda^{-1}\1_{(2^{-n_0-1},2^{-n_0}]}\dd\lambda$, we first bound the term
  \begin{equation}\label{eq:term 1}
    \left\lVert \Big\| \sup_{\eta \in \mathcal{B}^r}\frac{|\sum_{n \le n_0}\sum_{y \in \Lambda_n}\langle \mathcal{R}f - \Pi_xf(x), \psi^n_y \rangle \langle \psi^n_y, \eta^\lambda_x \rangle |}{\lambda^\gamma} \Big\|_{L^p(\d x)} \right\rVert_{L^q_{n_0}(\d \lambda)}.
  \end{equation}
  Since for $n \leq n_0$ one has $\lambda  \le 2^{-n}$ and consequently
  $$
    |\langle\psi^n_y, \eta^\lambda_x\rangle| \lesssim 2^{n\frac{|\s|}{2}},
  $$
  uniformly over all $y \in \Lambda_n$, all $n \le n_0$, all $\eta \in \mathcal{B}^r$ and all $x \in \R^d$. Moreover, this inner product vanishes as soon as $\|x - y\|_{\s} > C2^{-n}$ for some constant $C > 0$ only depending on the size of the support of $\psi$. Hence, using all estimates above we get
  \begin{align*}
    &\left\lVert \Big\| \sup_{\eta \in \mathcal{B}^r}\frac{|\sum_{n \le n_0}\sum_{y \in \Lambda_n}\langle \mathcal{R}f - \Pi_x f(x), \psi^n_y \rangle \langle \psi^n_y, \eta^\lambda_x \rangle |}{\lambda^\gamma} \Big\|_{L^p(\d x)} \right\rVert_{L^q_{n_0}(\d \lambda)} \\
    &\quad\lesssim \sum_{n \le n_0} \sum_{\beta \in A_\gamma}\Big\|\sum_{y \in \Lambda_n, \|y - x\|_{\s} \le C2^{-n}}\int_{B(y,2^{-n})} 2^{n|\s|}\frac{|f(z) - \Gamma_{z,x}f(x)|_\beta}{2^{-n_0\gamma}} \dd z\, 2^{-n\beta}\Big\|_{L^p(\d x)} \\
    &\quad\lesssim \sum_{n \le n_0} \sum_{\beta \in A_\gamma}2^{(n_0 - n)\gamma}\Big\|\int_{B(0,C^\prime2^{-n})} 2^{n|\s|}\frac{|f(x + h) - \Gamma_{x + h ,x}f(x)|_\beta}{2^{-n(\gamma - \beta)}} \dd h \Big\|_{L^p(\d x)} \\
    &\quad\lesssim \sum_{\beta \in A_\gamma}\sum_{n \le n_0}2^{(n_0 - n)\gamma}\int_{h \in B(0,C^\prime2^{-n})}2^{n|\s|}\Big\|\frac{|f(x + h) - \Gamma_{x + h ,x}f(x)|_\beta}{\Vert h \Vert_{\s}^{\gamma - \beta}}\Big\|_{L^p(\d x)} \dd h,
  \end{align*}
  where we used 
  Minkowski's integral inequality.
  Since $\gamma < 0$, we have $\sum_{n \le n_0} 2^{(n_0- n)\gamma} \lesssim 1$ uniformly over all $n_0 \geq 0$, (Note, that this step constitute the main difference compared to the construction of the reconstruction operator with positive regularity, cf. \cite{Hairer2017}.) Therefore, we can apply Jensen's inequality for finite discrete measures $n \in \{0,\dots, n_0\} \mapsto 2^{(n_0-n)\gamma}$ to obtain
  \begin{align}  
    &\left \lVert \left\lVert \Big\| \sup_{\eta \in \mathcal{B}^r}\frac{|\sum_{n \le n_0}\sum_{y \in \Lambda_n}\langle \mathcal{R}f - \Pi_xf(x), \psi^n_y \rangle \langle \psi^n_y, \eta^\lambda_x \rangle |}{\lambda^\gamma} \Big\|_{L^p(\d x)} \right\rVert_{L^q_{n_0}(\d \lambda)} \right \rVert_{\ell^q(n_0 \geq 0)}\nonumber \\ 
    &\quad\lesssim \sum_{\beta \in A_\gamma}\bigg(\sum_{ n_0 \ge 0}\bigg(\sum_{n \le n_0}2^{(n_0-n)\gamma}\int_{h \in B(0,C^\prime2^{-n})}2^{n|\s|}\Big\|\frac{|f(x + h) - \Gamma_{x + h ,x}f(x)|_\beta}{\Vert h \Vert_{\s}^{\gamma - \beta}}\Big\|_{L^p(\d x)} \dd h  \bigg)^q\bigg)^{\frac{1}{q}}\nonumber\\
    &\quad\lesssim  \sum_{\beta \in A_\gamma}\bigg(\sum_{ n_0 \ge 0}\sum_{n \le n_0}2^{(n_0-n)\gamma}\bigg(\int_{h \in B(0,C^\prime2^{-n})}2^{n|\s|}\Big\|\frac{|f(x + h) - \Gamma_{x + h ,x}f(x)|_\beta}{\Vert h \Vert_{\s}^{\gamma - \beta}}\Big\|_{L^p(\d x)} \dd h\bigg)^q  \bigg)^{\frac{1}{q}} \nonumber\\
    &\quad\lesssim \sum_{\beta \in A_\gamma} \bigg(\sum_{n \ge 0}\sum_{ n_0 \ge n}2^{(n_0-n)\gamma}\int_{h \in B(0,C^\prime 2^{-n})}2^{n|\s|}\Big\|\frac{|f(x + h) - \Gamma_{x + h ,x}f(x)|_\beta}{\Vert h \Vert_{\s}^{\gamma - \beta}}\Big\|_{L^p(\d x)}^q \,\d h \bigg)^{\frac{1}{q}} \nonumber\\
    &\quad\lesssim \sum_{\beta \in A_\gamma} \bigg(\int_{h \in B(0,C^\prime)}\Big\|\frac{|f(x + h) - \Gamma_{x + h ,x}f(x)|_\beta}{\Vert h \Vert_{\s}^{\gamma - \beta}}\Big\|_{L^p(\d x)}^q\, \frac{\d h}{\Vert h\Vert_{\s}^{|\s|}} \bigg)^{\frac{1}{q}}\nonumber \\
    &\quad\lesssim (1 + \Vert \Gamma \Vert) \vertiii{f}_{\gamma,p,q}\label{eq:page13},
  \end{align}
  where we used Jensen's inequality in the third line for the finite measure $2^{n|\s|}\dd h|_{B(0,C^\prime 2^{-n})}$ and the definition of $\vertiii{f}_{\gamma,p,q}$ for $f \in \mathcal{D}^{\gamma}_{p,q}$ in the last line; and from the fourth line to the fifth line we implicitly applied repeated decompositions of $B(0,C^\prime)$ into the disjoint union of annuli $2^{-n-k-1} < \Vert h \Vert_{\s} \le 2^{-n-k}$ and the obvious relation that $2^{n|\s|} \le 2^{-k|\s|}1/\Vert h \Vert_{\s}^{|\s|}$ for all $h$ with $2^{-n-k-1} < \Vert h \Vert_{\s} \le 2^{-n-k}$. Obviously, the same bound holds if we replace $\sum_{n \le n_0}\sum_{y \in \Lambda_n}\langle \mathcal{R}f - \Pi_xf(x), \psi^n_y \rangle \langle \psi^n_y, \eta^\lambda_x \rangle$ by 
  $$
    \sum_{n \le n_0}\sum_{y \in \Lambda_n}\langle \mathcal{R}f - \Pi_xf(x), \psi^n_y \rangle \langle \psi^n_y, \eta^\lambda_x \rangle +\sum_{y \in \Lambda_0}\langle \mathcal{R}f - \Pi_xf(x), \varphi_y\rangle \langle \varphi_y, \eta^\lambda_x\rangle.
  $$ 
  
  Now we turn to the term
  \begin{equation}\label{eq:term 2}
    \left\lVert \Big\| \sup_{\eta \in \mathcal{B}^r}\frac{|\sum_{n > n_0}\sum_{y \in \Lambda_n}\langle \mathcal{R}f - \Pi_xf(x), \psi^n_y \rangle \langle \psi^n_y, \eta^\lambda_x \rangle |}{\lambda^\gamma} \Big\|_{L^p(\d x)} \right\rVert_{L^q_{n_0}(\d \lambda)}.
  \end{equation}
  In this case, we have (cf. the proof of \cite[Theorem~3.10]{Hairer2014})
  $$
    |\langle \psi^n_y, \eta^\lambda_x\rangle| \lesssim 2^{-n\frac{|\s|}{2}-rn}\lambda^{-|\s|-r}
  $$
  uniformly over all $n > n_0$, all $\lambda \in(2^{-n_0-1},2^{-n_0}]$ and all $\eta \in \mathcal{B}^r$. Moreover, this inner product vanishes as soon as $\|y - x\|_{\s}> C\lambda$ for some constant $C$ only depending on the size of the support of $\psi$. Hence, as we have shown that
  $$
    \Big| \langle \mathcal{R}f - \Pi_xf(x), \psi^n_y\rangle \Big| \lesssim \sum_{\zeta \in A_\gamma}\sum_{\gamma >\beta \geq \zeta}\int_{B(y,2^{-n})} 2^{n|\s|}  |f(z) - \Gamma_{z,x}f(x)|_\beta \dd z \,2^{-n\beta-n\frac{|\s|}{2}},
  $$
  it yields that
  \begin{align*}
    & \left\lVert \Big\| \sup_{\eta \in \mathcal{B}^r}\frac{|\sum_{n > n_0}\sum_{y \in \Lambda_n}\langle \mathcal{R}f - \Pi_xf(x), \psi^n_y \rangle \langle \psi^n_y, \eta^\lambda_x \rangle |}{\lambda^\gamma} \Big\|_{L^p(\d x)} \right\rVert_{L^q_{n_0}(\d \lambda)}\\
    &\quad \lesssim \sum_{\beta \in A_\gamma} \sum_{n > n_0}\Big\|\sum_{y \in \Lambda_n, \|y - x\|_{\s} \leq C2^{-n_0}}\int_{B(y,2^{-n})}2^{n|\s|} \frac{|f(z) - \Gamma_{z,x}f(x)|_\beta}{2^{-n_0(\gamma + |\s| + r)}} \dd z\, 2^{-n(|\s|+ \beta + r)}\Big\|_{L^p(\d x)} \\
    &\quad\lesssim \sum_{\beta \in A_\gamma} \sum_{n > n_0} \Big\|\int_{h \in B(0,C^\prime2^{-n_0})}2^{n|\s|} \frac{|f(x+h) - \Gamma_{x+h,x}f(x)|_\beta}{2^{-n_0(\gamma + |\s| + r)}} \dd h\, 2^{-n(|\s|+ \beta + r)}\Big\|_{L^p(\d x)} \\
    &\quad\lesssim \sum_{\beta \in A_\gamma} \sum_{n > n_0}\Big\|\int_{h \in B(0,C^\prime2^{-n_0})}2^{n_0|\s|} \frac{|f(x+h) - \Gamma_{x+h,x}f(x)|_\beta}{\|h\|_{\s}^{\gamma - \beta}} \dd h \Big\|_{L^p(\d x)}2^{-(n-n_0)(\beta + r)}, 
  \end{align*}
  where we implicitly used the relation that 
  $$
    \sum_{y \in \Lambda_n, \|y - x \|_{\s} \leq C2^{-n_0}}\1_{B(y,2^{-n})} \lesssim \1_{B(x, C^\prime 2^{-n_0})}
  $$ 
  holds uniformly for all $n > n_0$. Then, using Jensen's inequality for the finite measure $2^{n|\s|}\dd h|_{B(0,C^\prime 2^{-n})}$ and Minkowski's integral inequality for the $\| \cdot \|_{L^p}$-norm and the integral with respect to $h$, we can further deduce that
  \begin{align*}
    &\left\lVert \Big\| \sup_{\eta \in \mathcal{B}^r}\frac{|\sum_{n > n_0}\sum_{y \in \Lambda_n}\langle \mathcal{R}f - \Pi_xf(x), \psi^n_y \rangle \langle \psi^n_y, \eta^\lambda_x \rangle |}{\lambda^\gamma} \Big\|_{L^p(\d x)} \right\rVert_{L^q_{n_0}(\d \lambda)} \\
    &\quad \lesssim \sum_{\beta \in A_\gamma} \sum_{n > n_0}\int_{h \in B(0,C^\prime2^{-n_0})}2^{n_0|\s|} \Big\|\frac{|f(x+h) - \Gamma_{x+h,x}f(x)|_\beta}{\|h\|_{\s}^{\gamma - \beta}} \Big\|_{L^p(\d x)}\dd h\, 2^{-(n-n_0)(\beta + r)}.
  \end{align*}
  Since $r \in \N$ satisfies that $r > |\alpha|$ and since $\gamma < 0$ such that $\alpha \leq \beta <0$ holds for all $\beta \in \mathcal{A}_\gamma$, one has $\beta + r > 0$ for all $\beta \in \mathcal{A}_\gamma$. This implies that we can apply Jensen's inequality for the discrete finite measures $n \in \{n_0+1,\dots\}\mapsto  2^{-(n-n_0)(\beta + r)}$ and obtain that 
  \begin{align*}
    &\left \lVert\left\lVert \Big\| \sup_{\eta \in \mathcal{B}^r}\frac{|\sum_{n > n_0}\sum_{y \in \Lambda_n}\langle \mathcal{R}f - \Pi_xf(x), \psi^n_y \rangle \langle \psi^n_y, \eta^\lambda_x \rangle |}{\lambda^\gamma} \Big\|_{L^p(\d x)} \right\rVert_{L^q_{n_0}(\d \lambda)}\right \rVert_{\ell^q(n_0\ge 0)}\\
    &\quad\lesssim\sum_{\beta \in A_\gamma}\bigg( \sum_{ n_0 \ge 0} \sum_{n > n_0} 2^{-(n-n_0)(\beta + r)} \\
    &\hspace{3.5cm}\times \bigg(\int_{h \in B(0,C^\prime2^{-n_0})}2^{n_0|\s|} \Big\|\frac{|f(x+h) - \Gamma_{x+h,x}f(x)|_\beta}{\|h\|_{\s}^{\gamma - \beta}} \Big\|_{L^p(\d x)}\dd h\bigg)^q \bigg)^{\frac{1}{q}} \\
    &\quad\lesssim \sum_{\beta \in A_\gamma}\bigg(\int_{h \in B(0,C^\prime)} \Big\|\frac{|f(x+h) - \Gamma_{x+h,x}f(x)|_\beta}{\|h\|_{\s}^{\gamma - \beta}} \Big\|_{L^p(\d x)}^q\,\frac{\d h}{\|h\|_{\s}^{|\s|}}\bigg)^{\frac{1}{q}} \\
    &\quad\lesssim (1 + \|\Gamma\|)\vertiii{f}_{\gamma,p,q},
  \end{align*}
  where we used Jensen's inequality in the third line, and used the same reasoning as in the inequality~\eqref{eq:page13} (from the fourth line to the fifth line) in the last step.
  
  Now we note that for any measurable function $g$ defined on $(0,1]$, it holds that $\|g\|_{L^q_\lambda} = \Big\|\|g\|_{L^q_{n_0}}\Big\|_{\ell^q(n_0 \ge 0)}$. This implies that  $\left\lVert \Big\| \sup_{\eta \in \mathcal{B}^r} \frac{|\langle \mathcal{R}f - \Pi_xf(x), \eta^\lambda_x \rangle|}{\lambda^\gamma} \Big\|_{L^p} \right\rVert_{L^q_\lambda}$ is equal to the sum of the $\ell^q$-norms of~\eqref{eq:term 1} and~\eqref{eq:term 2}, therefore it is bounded by $\vertiii{f}_{\gamma,p,q}$ due to the above estimates. This proves~\eqref{eq:resonstruction bound}. \smallskip
  
  \textit{Step 3:} Now suppose that $(\overline{\Pi}, \overline{\Gamma})$ is another model and we use $\overline{\mathcal{D}}^\gamma_{p,q}$ to denote the corresponding space of modelled distributions in the sense of Definition~\ref{def:Besov modelled distribution}. Fix a wavelet analysis $\{\varphi, \psi \in \Psi\}$ in $\mathcal{C}^r_0$, for given $f \in \mathcal{D}^\gamma_{p,q}$ and $g \in \overline{\mathcal{D}}^\gamma_{p,q}$ we define $\mathcal{R}f$ and $\overline{\mathcal{R}}g$ as in~\eqref{eq:reconstruction operator}:
  $$
    \mathcal{R} f := \sum_{n\in \N} \sum_{x\in \Lambda_n} \sum_{\psi \in \Psi}  \langle \Pi_x \overline{f}^n (x), \psi^n_x \rangle   \psi^n_x + \sum_{x \in \Lambda_0}\langle \Pi_x \overline{f}^0 (x), \varphi^0_x   \rangle   \varphi^0_x,
  $$
  $$
    \overline{\mathcal{R}} g := \sum_{n\in \N} \sum_{x\in \Lambda_n} \sum_{\psi \in \Psi}  \langle \overline{\Pi}_x \overline{g}^n (x), \psi^n_x \rangle   \psi^n_x + \sum_{x \in \Lambda_0}\langle \overline{\Pi}_x \overline{g}^0 (x), \varphi^0_x   \rangle   \varphi^0_x.
  $$
  From the results obtained in Step~1 and Step~2 we see that $\mathcal{R}f, \overline{\mathcal{R}}g$ are elements in $\mathcal{B}^{\bar{\alpha}}_{p,q}$ and satisfy the bound~\eqref{eq:resonstruction bound} for $(\Pi,\Gamma)$ and $(\overline{\Pi},\overline{\Gamma})$, respectively. 
  
  Now, for every $n \geq 0$, $x \in \R^d$, $y \in \Lambda_n$ and $\psi \in \Psi$, we can check that
  \begin{align}\label{eq:coefficient of the difference of two models}
  \begin{split}
    \langle \mathcal{R}f - & \overline{\mathcal{R}}g- \Pi_xf(x) + \overline{\Pi}_xg(x), \psi^n_y \rangle \\
    &= \langle \Pi_y\overline{f}^n(y) - \overline{\Pi}_y\overline{g}^n(y) - \Pi_xf(x) + \overline{\Pi}_xg(x), \psi^n_y \rangle \\
    &= \int_{z \in B(y,2^{-n})}2^{n|\s|}\langle \Pi_y\Gamma_{y,z}(f(z) - \Gamma_{z,x}f(x)) - \overline{\Gamma}_y\overline{\Gamma}_{y,z}(g(z) - \overline{\Gamma}_{z,x}g(x)), \psi^n_y \rangle \dd z.
  \end{split}
  \end{align}
  Since 
  \begin{align*}
    \Pi_y\Gamma_{y,z}(& f(z) - \Gamma_{z,x} f(x)) - \overline{\Gamma}_y\overline{\Gamma}_{y,z}(g(z) - \overline{\Gamma}_{z,x}g(x)) \\
    &= \Pi_y\Gamma_{y,z}(f(z) - \Gamma_{z,x}f(x) - g(z) 
    + \overline{\Gamma}_{z,x}g(x)) + (\Pi_y - \overline{\Pi}_y)\Gamma_{y,z}(g(z) - \overline{\Gamma}_{z,x}g(x))\\
    &\quad + \overline{\Pi}_y(\Gamma_{y,z} - \overline{\Gamma}_{y,z})(g(z) - \overline{\Gamma}_{z,x}g(x)),
  \end{align*} 
  we can get the following bound for~\eqref{eq:coefficient of the difference of two models}:
  \begin{align*}
    |\langle \mathcal{R} & f - \overline{\mathcal{R}}g- \Pi_xf(x) + \overline{\Pi}_xg(x), \psi^n_y \rangle|\\ 
    \leq & \|\Pi\|\|\Gamma\|\sum_{\zeta \in A_\gamma}\sum_{\gamma >\beta \ge \zeta}\int_{z \in B(y,2^{-n})}2^{n|\s|}|f(z) - \Gamma_{z,x}f(x) - g(z) + \overline{\Gamma}_{z,x}g(x)|_{\beta}\dd z\, 2^{-n\beta - \frac{n|\s|}{2}} \\
    &+ \|\Pi - \overline{\Pi}\|\|\Gamma\|\sum_{\zeta \in A_\gamma}\sum_{\gamma >\beta \ge \zeta}\int_{z \in B(y,2^{-n})}2^{n|\s|}|g(z) - \overline{\Gamma}_{z,x}g(x)|_{\beta}\dd z \, 2^{-n\beta - \frac{n|\s|}{2}} \\
    &+ \|\overline{\Pi}\|\|\Gamma - \overline{\Gamma}\|\sum_{\zeta \in A_\gamma}\sum_{\gamma >\beta \ge \zeta}\int_{z \in B(y,2^{-n})}2^{n|\s|}|g(z) - \overline{\Gamma}_{z,x}g(x)|_{\beta}\dd z\,2^{-n\beta - \frac{n|\s|}{2}}.
  \end{align*}
  Hence, by replacing the integrand $|f(z) - \Gamma_{z,x}f(x)|_{\beta}$ by $|f(z) - \Gamma_{z,x}f(x) - g(z) + \overline{\Gamma}_{z,x}g(x)|_{\beta}$ and $|g(z) - \overline{\Gamma}_{z,x}g(x)|_{\beta}$ in the estimate~\eqref{eq:estimate of the coefficient of one model}, we can apply the same arguments for establishing~\eqref{eq:resonstruction bound} in the Step~2 to obtain the bound~\eqref{eq:reconstruction bound for two models}, and complete the proof.
\end{proof}

\begin{remark}
  While we equip the spaces of modelled distributions with Besov norms, we kept the original definition of models, which comes with H\"older type estimates. This seems to be the reason why the reconstruction operator in Theorem~\ref{thm:reconstruction negative} maps, in general, modelled distributions to generalized functions with a slightly lower Besov regularity, cf. Remark~3.2 in \cite{Hairer2017}. However, to generalize the definition of models to models with Besov type estimates is outside the scope of the present article and left for future research since the H\"older type estimates allow for various Besov type generalizations and the natural choice might depend on the specific application in mind.  
\end{remark}

\subsection{Reconstruction theorem for models with local bounds}\label{subsec:reconstruction for local models}

The original definition of models (recall Definition~\ref{def:local model}) requires the bounds in~\eqref{eq:local model} to hold only \textit{locally}, that means to hold on every compact set. In the light of stochastic integration it seems to be natural to assume global bounds in the definition of models (recall Definition~\ref{def:global model}), see Section~\ref{sec:stochastic integration}. However, using the same arguments as in the proof of Theorem~\ref{thm:reconstruction negative} or of Theorem~3.1 in~\cite{Hairer2017}, one can obtain the reconstruction theorem for models with local bounds and, consequently, for local Besov spaces of modelled distributions. The only difference is to carry out the arguments on every compact set~$\mathcal{K}\subset \R^d$ instead of $\R^d$.

For the rest of this subsection we fix a regularity structures $\mathcal{T}=(A,T,G)$ with a model~$(\Pi,\Gamma)$ in the sense of Definition~\ref{def:local model}. The local Besov spaces are defined in the obvious manner.

\begin{definition}
  Let $\gamma\in \R$ and $p,q\in [1,\infty)$.  Let $\alpha \in \R $ and $r\in \N$ be such that $r>|\alpha|$. 
  \begin{itemize}
    \item  The \textit{local Besov space}~$\mathcal{D}_{p,q}^{\gamma, \textup{loc}}$ of modelled distributions consists of all functions $f\colon\R^{d}\to T_{\gamma}^{-}$ such that $ \interleave f\interleave_{\gamma,p,q,\mathcal{K}}<\infty$ for every compact set $\mathcal{K}\subset \R^d$.
    \item For $\alpha < 0$ the \textit{local Besov space}~$\mathcal{B}^{\alpha,\textup{loc}}_{p,q}$ is the space of all distributions~$\xi$ on $\R^d$ such that, for every compact set $\mathcal{K}\subset \R^d$,
    \begin{equation*}
      \bigg\| \Big\| \sup_{\eta\in \mathcal{B}^r} \frac{|\langle \xi , \eta^\lambda_x \rangle|}{\lambda^\alpha}  \Big\|_{L^p(\mathcal{K})} \bigg\|_{L^q_\lambda} <\infty. 
    \end{equation*}
    \item  For $\alpha \geq 0$ the \textit{local Besov space}~$\mathcal{B}^{\alpha,\textup{loc}}_{p,q}$ is the space of all distributions~$\xi$ on $\R^d$ such that, for every compact set $\mathcal{K}\subset \R^d$,
    \begin{equation*}
      \bigg\| \sup_{\eta\in \mathcal{B}^r} |\langle \xi , \eta_x^1 \rangle|   \bigg\|_{L^p(\mathcal{K})} <\infty
      \quad \text{and}\quad
      \bigg\| \Big\| \sup_{\eta\in \mathcal{B}^r_{\lfloor \alpha\rfloor}} \frac{|\langle \xi , \eta^\lambda_x \rangle|}{\lambda^\alpha}  \Big\|_{L^p(\mathcal{K})} \bigg\|_{L^q_\lambda} <\infty. 
    \end{equation*}
  \end{itemize}
\end{definition}

Based on these local versions of Besov spaces, the reconstruction theorem for modelled distributions with negative regularity reads as follows.

\begin{corollary}\label{cor:reconstruction theorem with local models}
  Let $\mathcal{T}=(A,T,G)$ be a regularity structures with a model~$(\Pi,\Gamma)$ in the sense of Definition~\ref{def:local model}.
  \begin{enumerate}
    \item Suppose that $\alpha := \min A < \gamma < 0 $. If $q = \infty $, let $\bar{\alpha} = \alpha$, otherwise take $\bar{\alpha} < \alpha$. Then, there exists a continuous linear operator $\mathcal{R} \colon \mathcal{D}^{\gamma,\textup{loc}}_{p,q} \rightarrow \mathcal{B}^{\bar{\alpha},\textup{loc}}_{p,q}$ such that 
    \begin{equation}\label{eq:local reconstruction}
      \left\lVert \Big\| \sup_{\eta \in \mathcal{B}^r} \frac{|\langle \mathcal{R}f - \Pi_xf(x), \eta^\lambda_x \rangle|}{\lambda^\gamma} \Big\|_{L^p(\mathcal{K})} \right\rVert_{L^q_\lambda} \lesssim \|\Pi\|_{\gamma,\mathcal{K}}(1 + \|\Gamma\|_{\gamma,\mathcal{K}})\vertiii{f}_{\gamma,p,q,\mathcal{K}},
    \end{equation}
    holds uniformly over all $f \in \mathcal{D}^{\gamma,\textup{loc}}_{p,q}$ and for every compact set $\mathcal{K}\subset \R^d$.
    \item Suppose that $\gamma >0$ and $\alpha := \min(A\setminus \N)\wedge \gamma $. If $q = \infty $, let $\bar{\alpha} = \alpha$, otherwise take $\bar{\alpha} < \alpha$. Then, there exists a unique continuous linear operator $\mathcal{R} \colon \mathcal{D}^{\gamma,\textup{loc}}_{p,q} \rightarrow \mathcal{B}^{\bar{\alpha},\textup{loc}}_{p,q}$ such that~\eqref{eq:local reconstruction} holds uniformly over all $f \in \mathcal{D}^{\gamma,\textup{loc}}_{p,q}$ and for every compact set $\mathcal{K}\subset \R^d$.
  \end{enumerate}

  Furthermore, let $\overline{\mathcal{R}}$ be a reconstruction operator w.r.t. to another model $(\overline{\Pi},\overline{\Gamma})$ in the sense of Definition~\ref{def:local model} for $\mathcal{T}=(A,T,G)$. Then, in both above cases, $\overline{\mathcal{R}}$ and $\mathcal{R}$ satisfy the localized version of~\eqref{eq:reconstruction bound for two models} for every compact set $\mathcal{K}\subset \R^d$.   
\end{corollary}

\begin{proof}
  1. The reconstruction operator~$\mathcal{R}$ is constructed as before, see~\eqref{eq:reconstruction operator}. In order obtain the reconstruction theorem for local Besov spaces of modelled distribution, the only change in the proof of the convergence, of the bound~\eqref{eq:local reconstruction} and of continuity with respect to the models (i.e.~\eqref{eq:reconstruction bound for two models}) is to replace the norm $\|\cdot \|_{L^p}$ with $\|\cdot \|_{L^p(\mathcal{K})}$ and carry out exactly the same arguments for every compact set~$\mathcal{K}\subset \R^d$. 

  2. In case of positive regularity~$\gamma$, the reconstruction operator is defined as on page~2603 in~\cite{Hairer2017} and again the estimates and arguments as given in the proof of Theorem~3.1 in \cite{Hairer2017} transfer line by line to the setting of local Besov spaces. This observation was already made in the case of Sobolev--Slobodeckij in an early version of the present article and also pointed out for the more general case of Besov spaces in~\cite{Hairer2017}, see Remark~2.9 in~\cite{Hairer2017}. Secondly, let us remark that the assumption that the polynomial regularity is included in the considered regularity structure $\mathcal{T}=(A,T,G)$ is not necessary, see also page~2596 in~\cite{Hairer2017}.
\end{proof}

\section{Stochastic integration on spaces of modelled distributions}\label{sec:stochastic integration}

This section is devoted to prove that the Besov spaces~$\mathcal{D}_{p,q}^{\gamma}$ of modelled distributions are UMD Banach spaces and of martingale type~$2$. These Banach space properties open the door to apply highly developed stochastic integration theory on the spaces~$\mathcal{D}_{p,q}^{\gamma}$. For example one can integrate predictable $\mathcal{D}_{p,q}^{\gamma}$-valued processes with respect to Brownian motion. One successful application of stochastic integration on Banach spaces lies in the area of stochastic partial differential equations, see e.g. \cite{Brzezniak1995} and \cite{vanNeerven2007}, which we will discuss in more details in Section~\ref{sec:SPDEs} below. For a more comprehensive introduction and treatment of stochastic integration on Banach spaces we refer for instance to \cite{Dalang2015,Mandrekar2015}.\smallskip

Let $(\Omega,\mathcal{F},\mathbb{F},\mathbb{P})$ be a complete filtered probability space, $I\subset\mathbb{R}$, $\mathbb{F}:=(\mathcal{F}_{t})_{t\in I}$ be an increasing family of sub-$\sigma$-algebra of $\mathcal{F}$ and $X$ be a Banach space with norm $\|\cdot\|_{X}$. The expectation operator with respect to $\mathbb{P}$ is denoted~$\mathbb{E}$ and the corresponding conditional expectation by $\mathbb{E}[\,\cdot\,|\mathcal{F}_{t}]$ for $t\in I$. A process $(M_{t})_{t\in I}$ is a \textit{$X$-valued martingale} if and only if $M_{t}\in L^{1}(\Omega,\mathcal{F}_{t},\mathbb{P};X)$ for all $t\in I$ and 
\begin{equation*}
  \mathbb{E}[M_{t}|\mathcal{F}_{s}]=M_{s}\quad \P\text{-}a.s.,\quad \text{for all }s,t\in I\text{ with }s\leq t.
\end{equation*}
A sequence $(\xi_{i})_{i\in\mathbb{N}}$ is called \textit{martingale difference} if $(\sum_{i=0}^{n}\xi_{i})_{n\in\mathbb{N}}$ is a $X$-valued martingale. To rely on stochastic integration theory on Banach spaces, one needs to require some additional properties on the Banach space $X$. The definitions are taken from~\cite{Brzezniak1995}, see Definition~2.1 and Definition~B.2 therein.

\begin{definition}
  Let $(\Omega,\mathcal{F},\mathbb{P})$ be a complete probability space.
  \begin{itemize}
    \item A Banach space $(X,\|\cdot \|_X)$ is of \textit{martingale type $p$} for $p\in[1,\infty)$ if any $X$-valued martingale $(M_n)_{n\in\mathbb{N}}$ satisfies 
          \begin{equation*}
            \sup_{n}\mathbb{E}[\|M_{n}\|_{X}^{p}]\leq C_{p}(X)\sum_{n\in\mathbb{N}}\mathbb{E}[\|M_{n}-M_{n-1}\|_{X}^{p}]
          \end{equation*}
          for some constant $C_{p}(X)>0$ independent of the martingale $(M_{n})_{n\in\mathbb{N}}$ and $M_{-1}:=0$.
    \item A Banach space $(X,\|\cdot \|_X)$ if of\textit{ type $p$} for $p\in[1,2]$ if any finite sequence $\epsilon_{1},\dots,\epsilon_{n}\colon\Omega\to\{-1,1\}$ of symmetric and i.i.d. random variables
          and for any finite sequence $x_{1},\dots,x_{n}$ of elements of $X$ the inequality 
          \begin{equation*}
            \mathbb{E}\bigg[\bigg\|\sum_{i=1}^{n}\epsilon_{i}x_{i}\bigg\|_{X}^{p}\bigg]\leq K_{p}(X)\sum_{i=1}^{n}\|x_{i}\|_{X}^{p}
          \end{equation*}
          holds for some constant $K_{p}(X)>0$.
    \item A Banach space $(X,\|\cdot \|_X)$ is called an \textit{UMD space} or is said to have the \textit{unconditional martingale difference property} if for any $p\in(1,\infty)$, for any martingale difference
          $(\xi_{j})_{j\in\mathbb{N}}$ and for any sequence $(\epsilon_{i})_{i\in\mathbb{N}}\subset\{-1,1\}$ the inequality 
          \begin{equation*}    
            \mathbb{E}\bigg[\bigg\|\sum_{i=1}^{n}\epsilon_{i}\xi_{j}\bigg\|_{X}^{p}\bigg]\leq\tilde{K}_{p}(X)\mathbb{E}\bigg[\bigg\|\sum_{i=1}^{n}\xi_{i}\bigg\|_{X}^{p}\bigg]
          \end{equation*}
          holds for all $n\in\mathbb{N}$, where $\tilde{K}_{p}(X)>0$ is some constant.
  \end{itemize}
\end{definition}

Let us remark that Hilbert spaces and finite dimensional Banach spaces are always UMD spaces.\smallskip

Coming back to a regularity structure $\mathcal{T}=(A,T,G)$ with an associated model $(\Pi,\Gamma)$ and let us assume now additionally that each $T_{\alpha}$ is an UMD space for $\alpha \in A$. Under this assumption the space $T_{\gamma}^{-}=\bigoplus_{\alpha<\gamma}T_{\alpha}$ is again an UMD space (Theorem~4.5.2 in \cite{Amann1995}) since $A$ is locally finite and $T$ is a finite product of UMD spaces. 

\begin{proposition}\label{prop:UMD property}
  Let $\mathcal{T}=(A,T,G)$ be a regularity structure with a model $(\Pi,\Gamma)$ as in the Definition~\ref{def:global model}. Suppose that $\gamma \in \R$ and that the Banach space $T_\alpha$ is an UMD space for every $\alpha \in A$. Then, the space $\mathcal{D}_{p,q}^{\gamma}$ is an UMD spaces, too, for $1 < p < \infty$ and $1 < q < \infty$. If the Banach space $T_\gamma^-$ is additionally of type $2$, then $\mathcal{D}^\gamma_{p,q}$ is of martingale type $2$ for every $p\geq2$ and $q \geq2$.
\end{proposition}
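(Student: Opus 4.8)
The plan is to realize $\mathcal{D}^\gamma_{p,q}$ as a closed subspace of a concrete vector-valued Bochner/sequence space built out of $T_\gamma^-$, and then invoke the stability of the UMD and martingale-type-$2$ properties under (i) finite direct sums, (ii) passage to $L^p$- and $\ell^q$-type spaces with exponents in $(1,\infty)$, respectively $\ge 2$, and (iii) passage to closed subspaces. Concretely, I would fix an enumeration of $A_\gamma$ and, for each $\alpha\in A_\gamma$, read off the norm $\vertiii{f}_{\gamma,p,q}$ as the sum over $\alpha$ of two pieces: the $L^p(\R^d;T_\alpha)$-norm of $x\mapsto \mathcal{Q}_\alpha f(x)$, and the mixed norm $\bigl\|\,\|h\|_\s^{-(\gamma-\alpha)}\,\mathcal{Q}_\alpha\bigl(f(\cdot+h)-\Gamma_{\cdot+h,\cdot}f(\cdot)\bigr)\bigr\|_{L^p(\R^d;T_\alpha)}$ measured in $L^q(B(0,1),\d h/\|h\|_\s^{|\s|})$. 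This exhibits an isometric embedding
\[
  \iota\colon \mathcal{D}^\gamma_{p,q}\hookrightarrow \Bigl(\bigoplus_{\alpha\in A_\gamma} L^p(\R^d;T_\alpha)\Bigr)\ \oplus\ \Bigl(\bigoplus_{\alpha\in A_\gamma} L^q\bigl(B(0,1),\tfrac{\d h}{\|h\|_\s^{|\s|}};\,L^p(\R^d;T_\alpha)\bigr)\Bigr),
\]
where the outer direct sums are finite because $A$ is locally finite and bounded below, hence $A_\gamma$ is finite.

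Next I would assemble the stability ingredients, all of which are classical and can be cited. For UMD: $T_\alpha$ is UMD (given), $L^p(\mu;X)$ is UMD whenever $X$ is UMD and $1<p<\infty$ (this applies twice, nesting $L^q$ over $L^p$, using $1<p,q<\infty$), and finite direct sums of UMD spaces are UMD; finally a closed subspace of a UMD space is UMD. Hence $\iota(\mathcal{D}^\gamma_{p,q})$, being isometric to the closed image of $\mathcal{D}^\gamma_{p,q}$ inside a UMD space, is UMD, so $\mathcal{D}^\gamma_{p,q}$ is UMD. For martingale type $2$: if $T_\gamma^-$ is additionally of type $2$, then (by the standard fact that an M-type-$2$, equivalently here, for UMD spaces, type-$2$ space remains so under $L^p$ for $p\ge2$) the spaces $L^p(\R^d;T_\alpha)$ and the nested $L^q(\cdots;L^p(\cdots))$ are of martingale type $2$ for $p,q\ge2$; finite direct sums preserve martingale type $2$, and so do closed subspaces. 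Composing, $\mathcal{D}^\gamma_{p,q}$ is of martingale type $2$.

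The one point requiring a little care — and the main obstacle — is that $\mathcal{D}^\gamma_{p,q}$ must genuinely be a \emph{closed} subspace of the target space, i.e.\ that $\iota$ has closed range. I would argue this by showing $\iota$ is bounded below (it is in fact isometric onto its image by construction of $\vertiii{\cdot}_{\gamma,p,q}$) and that $\mathcal{D}^\gamma_{p,q}$ is complete; completeness follows because a $\vertiii{\cdot}_{\gamma,p,q}$-Cauchy sequence $(f_k)$ has each component $\mathcal{Q}_\alpha f_k$ Cauchy in $L^p(\R^d;T_\alpha)$, hence convergent to some $f$ with $\mathcal{Q}_\alpha f\in L^p$, and the mixed-norm translation-difference terms converge by Fatou along a subsequence, giving $f\in\mathcal{D}^\gamma_{p,q}$ and $\vertiii{f_k-f}_{\gamma,p,q}\to0$. (One must check that the map $f\mapsto\bigl(h\mapsto f(\cdot+h)-\Gamma_{\cdot+h,\cdot}f(\cdot)\bigr)$ is well-defined and measurable, but the bounds in Definition~\ref{def:global model} on $\Gamma$ make this routine.) A subtlety worth flagging explicitly is the exponent ranges: UMD requires $1<p,q<\infty$, which is exactly the hypothesis stated; martingale type $2$ needs $p,q\ge2$, and for these exponents the $L^p$/$\ell^q$-stability of type $2$ does hold, so no extra assumption is needed. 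With closedness in hand, the three stability principles above close the argument. $\hfill\qed$
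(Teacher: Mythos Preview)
Your proposal is correct and follows essentially the same route as the paper: the paper also embeds $\mathcal{D}^\gamma_{p,q}$ isometrically into the finite product $\prod_{\alpha\in A_\gamma}\bigl(L^p(\R^d;T_\alpha)\times L^q_\mu(B(0,1);L^p(\R^d;T_\alpha))\bigr)$ via exactly the maps you describe, and then invokes the stability of UMD (respectively, UMD plus type~$2$ $\Rightarrow$ martingale type~$2$) under $L^p$-spaces, finite products, and closed subspaces. If anything, your treatment of the closedness/completeness point is more explicit than the paper's, which simply asserts that the embedding is an isometry onto its image and hence has closed range.
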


\begin{proof}
  Since every $T_\alpha$ with $\alpha \in A_\gamma$ is an UMD space by assumption, by Theorem~4.5.2 in~\cite{Amann1995} every $L^p(\R^d;T_\alpha)$ is also an UMD space. Furthermore, let $\mu$ be the Borel measure on $\R$ defined by 
  $$
    \mu(h) := \frac{1}{\|h\|_{\mathfrak{s}}^{|\mathfrak{s}|}} \dd h,
  $$
  the corresponding $L^q$-space $L^q_\mu(B(0,1); L^p(\R^d; T_\alpha))$ is again an UMD space for every $\alpha \in A_\gamma$ due to Theorem~4.5.2 in~\cite{Amann1995}. Consequently the finite product space 
  $$
    \prod_{\alpha \in A_\gamma} \Big(L^p(\R^d;T_\alpha) \times L^q_\mu(B(0,1); L^p(\R^d; T_\alpha)) \Big)
  $$
  is an UMD space. We will show that $\mathcal{D}^\gamma_{p,q}$ is a closed linear subspace in the above product space and then by Theorem~4.5.2 in~\cite{Amann1995} again we can conclude that $\mathcal{D}^\gamma_{p,q}$ is an UMD space. 
  
  For this purpose we define for every $\alpha \in A_\gamma$ the following mappings
  \begin{align*}
    \Phi^\alpha_1 \colon \mathcal{D}^\gamma_{p,q} \rightarrow L^p(\R^d;T_\alpha)\quad \text{via}\quad
    f \mapsto f^\alpha
  \end{align*}
  and 
  \begin{align*}
    \Phi^\alpha_2 \colon \mathcal{D}^\gamma_{p,q} \rightarrow L^q_\mu(B(0,1); L^p(\R^d; T_\alpha))\quad \text{via}\quad
    f \mapsto \Big[h \mapsto \frac{f^\alpha(\cdot +h) - (\Gamma_{\cdot + h ,\cdot}f(\cdot))^\alpha}{\|h\|_{\mathfrak{s}}^{\gamma - \alpha}}\Big],
  \end{align*}
  where $f^\alpha$ is the projection of $f$ onto $T_\alpha$ and $\frac{f^\alpha(\cdot +h) - (\Gamma_{\cdot + h ,\cdot}f(\cdot))^\alpha}{\|h\|_{\mathfrak{s}}^{\gamma - \alpha}}$ is an element in $L^p(\R^d; T_\alpha)$ such that 
  $$
    \frac{f^\alpha(\cdot +h) - (\Gamma_{\cdot + h ,\cdot}f(\cdot))^\alpha}{\|h\|_{\mathfrak{s}}^{\gamma - \alpha}}(x) = \frac{f^\alpha(x +h) - (\Gamma_{x + h ,x}f(x))^\alpha}{\|h\|_{\mathfrak{s}}^{\gamma - \alpha}}
  $$
  for all $x \in \R^d$.
  
  Clearly, the mapping $\Big(\Phi^\alpha_1 \times \Phi^\alpha_2\Big)_{\alpha \in A_\gamma}$ is an isometry from $\mathcal{D}^\gamma_{p,q}$ onto its image in the product space 
  $$
    \prod_{\alpha \in A_\gamma} \Big(L^p(\R^d;T_\alpha) \times L^q_\mu(B(0,1); L^p(\R^d; T_\alpha)) \Big),
  $$
  so that we can embed $ \mathcal{D}^\gamma_{p,q} $ into the above product space as a closed linear subspace. By Theorem~4.5.2 in \cite{Amann1995} the space $ \mathcal{D}^\gamma_{p,q}$ is therefore UMD, too. The previous construction is similar to Lemma~A.5 in \cite{Brzezniak1995}. Since every UMD space of type $2$ is a Banach space of martingale type $2$ as shown in Proposition~B.4 in \cite{Brzezniak1995}, one concludes that $L^{p}(\R^d;T_\alpha)$ and $L^q_\mu(B(0,1); L^p(\R^d;T_\alpha))$ are of martingale type $2$ for every $p\in[2,\infty)$, $q\in[2,\infty)$ and $\alpha \in A_\gamma$, and the same argument as before applies.
\end{proof}

We can now formulate and prove our main theorem. Like in the Fubini theorem the order of reconstruction and stochastic integration can be interchanged:

\begin{theorem}\label{thm:fubini theorem}
  Let $\gamma > \alpha_0 := \inf A$, $\alpha_0 \notin \mathbb{Z}$ and $\mathcal{T}=(A,T,G)$ be a regularity structure together with a model $(\Pi,\Gamma)$ as in Definition~\ref{def:global model} and $T_\alpha$ is an UMD space for every $\alpha \in A$. Let $(\Omega,\mathcal{F},\mathbb{F}, \P)$ be a complete filtered probability space and $W$ be Brownian motion on $[0,T]$ for some $T \in (0,\infty)$. Let $H$ be a $\mathcal{D}^\gamma_{p,q}$-valued process for some $1 <p <\infty$ and $1 < q < \infty$ which is locally $L^2$-stochastically integrable with respect to $W$, then the order of ``integration'' can be interchanged
  \begin{equation}\label{eq:Fubini type formula}
    \bigg \langle \mathcal{R} \big( {(H \bullet W)} \big) , \psi \bigg \rangle = \bigg( \big \langle \mathcal{R}(H),\psi \big \rangle \bullet W \bigg ) \, 
  \end{equation}
  for every test function $ \psi\in \mathcal{B}^r$ with $r > |\alpha_0|$. Here $(H \bullet W)$ stands for the stochastic integral of $H$ with respect to $W$ and $\mathcal{R}$ denotes a reconstruction operator for $\mathcal{T}=(A,T,G)$ and $(\Pi,\Gamma)$.
\end{theorem}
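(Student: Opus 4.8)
\emph{Proposed proof.} The plan is to recognise the map $f\mapsto\langle\mathcal{R}f,\psi\rangle$ as a bounded linear functional on $\mathcal{D}^\gamma_{p,q}$ and then to reduce the asserted identity to the elementary fact that a scalar stochastic integral commutes with a bounded linear operator applied to its Banach-space-valued integrand. By Proposition~\ref{prop:UMD property} the space $\mathcal{D}^\gamma_{p,q}$ is UMD for $1<p,q<\infty$, so the $\mathcal{D}^\gamma_{p,q}$-valued stochastic integral $H\bullet W$ is well defined, after the usual localisation on each event of a localising sequence along which $\int_0^T\vertiii{H_t}_{\gamma,p,q}^2\,\mathrm{d}t<\infty$. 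Moreover $\mathcal{R}$ is a continuous linear map from $\mathcal{D}^\gamma_{p,q}$ into a Besov space $\mathcal{B}^{s}_{p,q}$ for a suitable $s$: this is Theorem~\ref{thm:reconstruction negative} (with $s=\bar\alpha$) when $\gamma<0$, and Theorem~3.1 in~\cite{Hairer2017} when $\gamma>0$. I then set
\[
  L_\psi\colon\mathcal{D}^\gamma_{p,q}\to\R,\qquad L_\psi f:=\langle\mathcal{R}f,\psi\rangle,
\]
which is linear by linearity of $\mathcal{R}$ and of the pairing.

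To see that $L_\psi$ is bounded it suffices to show that $\xi\mapsto\langle\xi,\psi\rangle$ is continuous on $\mathcal{B}^{s}_{p,q}$ for each fixed $\psi\in\mathcal{B}^r$ with $r>|\alpha_0|$. I would obtain this exactly as in Step~2 of the proof of Theorem~\ref{thm:reconstruction negative}: expand $\xi$ in the wavelet basis $\{\varphi^0_x\}\cup\{\psi^n_x\}$, write $\langle\xi,\psi\rangle$ as the resulting absolutely convergent series, and estimate it by the coefficient bounds furnished by $\|\xi\|_{\mathcal{B}^{s}_{p,q}}$ (Proposition~2.4 in~\cite{Hairer2017}) together with the decay of $\langle\varphi^0_x,\psi\rangle$ and $\langle\psi^n_x,\psi\rangle$; the smoothness requirement $r>|\alpha_0|$ on $\psi$ is precisely what makes the series converge when $s\le 0$. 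Hence $L_\psi$ is a bounded linear functional on $\mathcal{D}^\gamma_{p,q}$.

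Next I would check that both sides of~\eqref{eq:Fubini type formula} are well-posed real-valued processes and then prove the commutation. Since $H$ is predictable and $\mathcal{R}$ and $\langle\cdot,\psi\rangle$ are continuous, $t\mapsto L_\psi H_t$ is predictable, and $|L_\psi H_t|\le\|L_\psi\|\,\vertiii{H_t}_{\gamma,p,q}$ gives $\int_0^T|L_\psi H_t|^2\,\mathrm{d}t<\infty$ on each localising event, so $(L_\psi H)\bullet W$ exists as an ordinary It\^o integral; on the other side $\langle\mathcal{R}(H\bullet W),\psi\rangle$ is a real random variable obtained by applying the continuous map $L_\psi$ to the $\mathcal{D}^\gamma_{p,q}$-valued variable $H\bullet W$. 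For a simple integrand $H=\sum_i\mathbf{1}_{(t_i,t_{i+1}]}\,\xi_i$ with $\mathcal{F}_{t_i}$-measurable simple $\xi_i$, the identity $L_\psi(H\bullet W)=(L_\psi H)\bullet W$ is just linearity of $L_\psi$, namely
\[
  L_\psi\Big(\sum_i\xi_i\,(W_{t_{i+1}}-W_{t_i})\Big)=\sum_i(L_\psi\xi_i)\,(W_{t_{i+1}}-W_{t_i}).
\]
For general $H$ I would approximate by simple integrands $H^{(n)}$ with $H^{(n)}\bullet W\to H\bullet W$ in probability uniformly on $[0,T]$ --- such $H^{(n)}$ exist by the construction of the $\mathcal{D}^\gamma_{p,q}$-valued integral in the UMD/martingale-type-$2$ setting, cf.~\cite{Brzezniak1995,vanNeerven2007} --- apply $L_\psi$ to both sides, pass to the limit using continuity of $L_\psi$ and of the It\^o integral (note $|L_\psi(H^{(n)}_t-H_t)|\le\|L_\psi\|\,\vertiii{H^{(n)}_t-H_t}_{\gamma,p,q}$), and finally undo the localisation. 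This yields~\eqref{eq:Fubini type formula}, and by passing to continuous modifications the identity holds for all $t$ simultaneously, almost surely.

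The genuinely substantial inputs --- the continuity of $\mathcal{R}$ on Besov-modelled distributions and the UMD property of $\mathcal{D}^\gamma_{p,q}$ --- are already available from Theorem~\ref{thm:reconstruction negative} and Proposition~\ref{prop:UMD property}, so the remaining work is comparatively soft. I expect the two points needing the most care to be: (a) the boundedness of the test-function pairing on the target space $\mathcal{B}^{s}_{p,q}$ when $s\le 0$, which is exactly where the hypothesis $r>|\alpha_0|$ on $\psi$ enters; and (b) verifying that the ``bounded operator commutes with the stochastic integral'' principle genuinely applies here, with a merely locally $L^2$-stochastically integrable integrand and a scalar (hence trivially UMD) target, which comes down to handling the approximation by simple processes and the localisation with care.
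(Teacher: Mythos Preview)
Your approach is essentially the same as the paper's: verify~\eqref{eq:Fubini type formula} for elementary processes by linearity, pass to general $L^2$-stochastically integrable $H$ by approximation using the continuity of $\mathcal{R}$ and of the pairing $\langle\cdot,\psi\rangle$, then localise. The packaging via the single bounded functional $L_\psi$ is a clean way to phrase it.

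One technical point to tighten: for general $1<p,q<\infty$ the space $\mathcal{D}^\gamma_{p,q}$ is UMD but need not be of type~$2$, so $L^2$-stochastic integrability in the sense of~\cite{vanNeerven2007} is \emph{not} characterised by the pathwise condition $\int_0^T\vertiii{H_t}_{\gamma,p,q}^2\,\mathrm{d}t<\infty$ but rather by $H$ representing (locally) an element of $\gamma(L^2([0,T]);\mathcal{D}^\gamma_{p,q})$. Your bound $|L_\psi H_t|\le\|L_\psi\|\,\vertiii{H_t}_{\gamma,p,q}$ therefore does not by itself yield integrability of $L_\psi H$; the paper closes this gap by invoking the ideal property of $\gamma$-radonifying operators (\cite[Proposition~2.3]{vanNeerven2007}), which gives $\|L_\psi\circ H\|_{\gamma(L^2;\R)}\le\|L_\psi\|\,\|H\|_{\gamma(L^2;\mathcal{D}^\gamma_{p,q})}$ and, since $\gamma(L^2([0,T]);\R)=L^2([0,T])$, exactly the scalar square-integrability you need.
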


\begin{proof}
  \textit{Step~1:} First we assume that $H$ is an elementary process which can be written as
  $$
    H(\omega,t) = \sum_{n = 1}^N \sum_{m = 1}^M \1_{(t_{n-1},t_n]}(t) \1_{A_{mn}}(\omega) f_{mn}
  $$
  where $0 = t_0 < t_1 < \dots < t_N = T$, $A_{mn} \in \mathcal{F}_{t_{n-1}}$ for all $m = 1,\dots,M$ and are pairwise disjoint, $f_{mn} \in \mathcal{D}^\gamma_{p,q}$ for all $m$ and $n$. Here $\1_{A_{mn}}$ denotes the indicator function of the set~$A_{n,m}$.
  
  Then it holds that for all $t \in [0,T]$,
  $$
   (H \bullet W)_t = \sum_{n = 1}^N \sum_{m = 1}^M \1_{A_{mn}}(W_{t \wedge t_n} - W_{t \wedge t_{n-1}})f_{mn},
  $$
  and therefore $\mathcal{R}((H \bullet W)_t) =  \sum_{n = 1}^N \sum_{m = 1}^M \1_{A_{mn}}(W_{t \wedge t_n} - W_{t \wedge t_{n-1}})\mathcal{R}f_{mn}$ as well as
  $$
    \bigg \langle \mathcal{R} \big( {(H \bullet W)_t} \big) , \psi \bigg \rangle = \sum_{n = 1}^N \sum_{m = 1}^M \1_{A_{mn}}(W_{t \wedge t_n} - W_{t \wedge t_{n-1}})\langle \mathcal{R}f_{mn}, \psi \rangle.
  $$
  On the other hand, we have
  $$
    \big \langle \mathcal{R}(H)(\omega,t),\psi \big \rangle = \sum_{n = 1}^N\sum_{m = 1}^M \1_{(t_{n-1},t_n]}(t) \1_{A_{mn}}(\omega) \langle \mathcal{R}f_{mn}, \psi \rangle
  $$
  which is an real-valued elementary process. Hence, we indeed have
  $$
    \bigg( \big \langle \mathcal{R}(H),\psi \big \rangle \bullet W \bigg )_t = \sum_{n = 1}^N \sum_{m = 1}^M \1_{A_{mn}}(W_{t \wedge t_n} - W_{t \wedge t_{n-1}})\langle \mathcal{R}f_{mn}, \psi \rangle.
  $$
  Obviously now we obtain~\eqref{eq:Fubini type formula} for all elementary processes $H$. \smallskip
 
  \textit{Step~2:} Now suppose that $H$ is a $L^2$-stochastically integrable process. By Theorems~3.5 (It\^o isomorphism) and Theorem~3.6 in \cite{vanNeerven2007}, there exists a sequence $(H_n)_{n \ge 1}$ of elementary processes such that
  \begin{equation*}
    H_n \rightarrow H\quad \text{in} \quad L^2(\Omega, \mathbb{P}; \gamma(L^2([0,T],\d t);\mathcal{D}^\gamma_{p,q})),
  \end{equation*}
  where $\gamma(L^2([0,T],\d t);\mathcal{D}^\gamma_{p,q})$ denotes the space of $\gamma$-radonifying operators from the space $L^2([0,T],\d t)$ into $\mathcal{D}^\gamma_{p,q}$ (see Section~2.2 in \cite{vanNeerven2007}) and 
  $$
    (H \bullet W) = \lim  (H_n \bullet W)\quad \text{in}\quad L^2(\Omega; C([0,T]; \mathcal{D}^\gamma_{p,q})).
  $$
  Now we choose an $\bar{\alpha} < \alpha_0$ with $\lfloor \bar{\alpha} \rfloor = \lfloor \alpha_0 \rfloor$. By Theorem~3.1 in~\cite{Hairer2017} (for $\gamma > 0$) and Theorem~\ref{thm:reconstruction negative} (for $\gamma < 0$) we know that $\mathcal{R} \colon \mathcal{D}^{\gamma}_{p,q} \rightarrow \mathcal{B}^{\bar{\alpha}}_{p,q}$ is a continuous linear mapping, which implies that 
  $$
    \mathcal{R}  \big( {(H \bullet W)} \big) = \lim_{ n \rightarrow \infty} \mathcal{R}  \big( {(H_n \bullet W)} \big)
  $$
  uniformly in $t \in [0,T]$ with respect to the Besov topology on $\mathcal{B}^{\bar{\alpha}}_{p,q}$. Since $\mathcal{B}^{\bar{\alpha}}_{p,q}$ can be embedded in the dual of $\mathcal{C}^r_0$ for $r \geq  \lfloor \bar{\alpha} \rfloor = \lfloor \alpha_0 \rfloor$, we can derive that 
  $$
    \bigg \langle \mathcal{R} \big( {(H \bullet W)} \big) , \psi \bigg \rangle = \lim_{n \rightarrow \infty}  \bigg \langle \mathcal{R} \big( {(H_n \bullet W)} \big) , \psi \bigg \rangle
  $$
  in $L^2(\Omega; C([0,T];\R))$ for any $\psi \in \mathcal{B}^r \subset \mathcal{C}^r_0$. 
  
  On the other hand, since the operator $\mathcal{R}$ and the dual pairing $\langle \cdot, \psi \rangle$ are continuous, the ideal property of $\gamma$-radonifying operators (cf. Proposition~2.3 in~\cite{vanNeerven2007}) implies that $\langle \mathcal{R}(H), \psi \rangle$ is $L^2$-stochastically integrable with respect to $W$ and 
  $$
    \mathbb{E}\Big[\| \langle \mathcal{R}(H_n), \psi \rangle - \langle \mathcal{R}(H), \psi \rangle\|_{L^2([0,T],\d t)}^2\Big] \le \|\mathcal{R}\|^2\|\psi\|_{\mathcal{C}^r_0}^2\,\mathbb{E}\Big[\|H_n - H\|^2_{\gamma(L^2([0,T],\d t);\mathcal{D}^\gamma_{p,q})}\Big],
  $$ 
  which implies that $\langle \mathcal{R}(H_n), \psi \rangle
  $ converges to $\langle \mathcal{R}(H), \psi \rangle$ in $L^2(\Omega \times [0,T], \mathbb{P} \times \d t)$ as $n$ tends to infinity and therefore by It\^o isometry we obtain that 
  $$
    \bigg( \big \langle \mathcal{R}(H),\psi \big \rangle \bullet W \bigg ) = \lim_{n \rightarrow \infty} \bigg( \big \langle \mathcal{R}(H_n),\psi \big \rangle \bullet W \bigg ) 
  $$
  in $L^2(\Omega; C([0,T];\R))$ for any $\psi \in \mathcal{B}^r \subset \mathcal{C}^r_0$. Since we have 
  $$
    \bigg \langle \mathcal{R} \big( {(H_n \bullet W)} \big) , \psi \bigg \rangle = \bigg( \big \langle \mathcal{R}(H_n),\psi \big \rangle \bullet W \bigg )
  $$
  for every $n$ by the result from Step~1, we obtain~\eqref{eq:Fubini type formula} for such~$H$. \smallskip
 
  \textit{Step~3:} Now suppose that $H$ is locally $L^2$-stochastically integrable with respect to~$W$. A standard localization argument together with the result from Step~2 then provides that~\eqref{eq:Fubini type formula} holds for all such~$H$.
\end{proof}

\begin{remark}
  It is fairly straightforward to verify with the presented arguments that Proposition~\ref{prop:UMD property} and Theorem~\ref{thm:fubini theorem} still hold if one replaces the space~$\mathcal{D}^\gamma_{p,q}$ with~$\mathcal{D}^\gamma_{p,q}(\mathcal{K})$ for a compact set $\mathcal{K}\subset \R^d$. This allows to derive analogue versions of Proposition~\ref{prop:UMD property} and Theorem~\ref{thm:fubini theorem} for the local space~$\mathcal{D}^{\gamma,\textup{loc}}_{p,q}$. In particular, the space~$\mathcal{D}^{\gamma,\textup{loc}}_{p,q}$ is locally an UMD space and of martingale type $2$, which just means that the space $\mathcal{D}^\gamma_{p,q}$ appropriately factorized by functions~$(\psi_i)$ with vanishing norm satisfies the properties.
\end{remark}

\section{Semilinear SPDEs in spaces of modelled distributions}\label{sec:SPDEs}

A corner stone of the theory of regularity structures are the existence and uniqueness results for mild solutions of  semilinear (stochastic) partial differential equations in spaces of modelled distributions, see Section~$7$ and~$8$ in \cite{Hairer2014}. Hence, in order to get unique mild solutions for singular SPDEs like the KPZ equation or the stochastic quantisation equation, these equations are considered as semilinear (S)PDEs in the space of modelled distributions as opposed to classical function spaces.

Having shown that the Besov spaces of modelled distributions are UMD spaces and of $M$-type $2$, gives us access to the solution theories of SPDEs in these Banach spaces, see e.g. \cite{Brzezniak1995,Brzezniak1997,vanNeerven2008,Brzezniak2018}, and, consequently, we obtain novel existence and uniqueness results for mild solutions of semilinear SPDEs in spaces of modelled distributions. In the following we briefly illustrate this for SPDEs with finite dimensional noise but we would like to emphasize that the theory of SPDEs in Banach spaces works, of course, also in the case of infinite dimensional noises, cf. \cite{vanNeerven2008,Brzezniak2018}. 

\subsection{Existence and uniqueness of mild solutions}\label{subsec:existence of SPDE}

Throughout this section we assume: $\mathcal{T}=(A,T,G)$ is a regularity structure with an associated model $(\Pi,\Gamma) $ in the sense of Definition~\ref{def:global model}, $T_{\gamma}^{-}=\bigoplus_{\alpha<\gamma}T_{\alpha}$ is of $M$-type $2$, each $T_{\alpha}$ is an UMD space for $\alpha \in A$, $\gamma \in \R_+ \setminus \N$ and $2 \leq p,q <\infty$. Recall that the corresponding spaces $\mathcal{D}^\gamma_{p,q}$ of modelled distributions are UMD spaces and of $M$-type $2$ by Proposition~\ref{prop:UMD property}. Furthermore, let $(\Omega,\mathcal{F},\mathbb{F}, \P)$ be a complete filtered probability space and $W=(W^1,\dots,W^n)$ be $n$-dimensional Brownian motion on $[0,T]$.\smallskip

Let us consider the semilinear SPDEs defined in $\mathcal{D}^\gamma_{p,q}$:
\begin{equation}\label{eq: Brz SPDE}
  \d Y_t = - \tilde{A} Y_t \dd t + \tilde{Z}_t + \sum_{i=1}^n \tilde{B}^i Y_t \dd W^{i}_t,\quad Y_0=y_0,\quad t\in [0,T],
\end{equation}
where $\tilde{Z}$ is a function taking values in $\mathcal{D}^\gamma_{p,q}$, $\tilde{B}^i$ is a linear (generally unbounded) operator on $\mathcal{D}^\gamma_{p,q}$ for $i=1,\dots,n$, and $-\tilde{A}$ is the infinitesimal generator of an analytic semigroup $\tilde{S}(t)=\exp (-t\tilde{A})$ on $\mathcal{D}^\gamma_{p,q}$ such that:
\begin{itemize}
  \item[(H1)] There is an $M>0$ such that for all $\lambda \ge 0$, $(\tilde{A}+\lambda I)^{-1}$ exists and
    \begin{equation}\label{eq: condition 1 for Brz SPDE}
      \|(\tilde{A}+ \lambda I)^{-1}\| \le \frac{M}{1+\lambda},
    \end{equation}
    where $I$ denotes the identity map.
  \item[(H2)] For all $s \in \R$, $\tilde{A}^{\sqrt{-1} s}$ exists and is a bounded linear operator on $\mathcal{D}^\gamma_{p,q}$, $\{\tilde{A}^{\sqrt{-1} s}\}_{s\in \R}$ is a $C_0$-group on $\mathcal{D}^\gamma_{p,q}$ and for some $K>0$ and $\vartheta_{\tilde{A}} < \pi/2$ and 
  \begin{equation}\label{eq: condition 2 for Brz SPDE}
	\|\tilde{A}^{\sqrt{-1} s}\| \le K \exp(\vartheta_{\tilde{A}} |s|), \quad s \in \R.
  \end{equation}
\end{itemize} 

\begin{definition}
  A stochastic process $(Y_t)_{t\in [0,T]}$ is called \textit{mild solution} of the semilinear SPDE~\eqref{eq: Brz SPDE} if
  \begin{equation*}
     Y_t = \tilde{S}(t)y_0  +\int_0^t \tilde{S}(t-s) \tilde{Z}_s \dd s+ \sum_{i=1}^n\int_0^t \tilde{S}(t-s)\tilde{B}^i Y_s  \dd W_i(s) , \quad t\in [0,T].
  \end{equation*}
\end{definition}

Let $D_{\tilde{A}}(\frac{1}{2},2)$ be the real interpolation space between $\mathcal{D}^\gamma_{p,q}$ and $D(\tilde{A})$, the domain of $\tilde{A}$, that is
$$
  D_{\tilde{A}}(\frac{1}{2},2) := \Big\{x \in \mathcal{D}^\gamma_{p,q}: \int_0^\infty \Big|t^{\frac{1}{2}}\tilde{A}\exp\{-t\tilde{A}\}x\Big|^2 \,\frac{\d t}{t} < \infty  \Big\}.
$$
The norm on this space is given by $|x|_{D_{\tilde{A}}(\frac{1}{2},2)} := \int_0^\infty |t^{\frac{1}{2}}\tilde{A}\exp\{-t\tilde{A}\}x|^2 \,\frac{\d t}{t}$. Note that $D(\tilde{A})$ and $D_{\tilde{A}}(\frac{1}{2},2)$ are UMD spaces and of $M$-type~$2$ since $\mathcal{D}^\gamma_{p,q}$ possesses these properties, cf. \cite{Brzezniak1995}. Let us assume that $\tilde{B}^i$, $i = 1,\dots,n$, satisfies that
\begin{equation}\label{eq: condition on unbounded operator B}
  \sum_{i=1}^n |\tilde{B}^ix|^2_{D_{\tilde{A}}(\frac{1}{2},2)} \le C_1|x|^2_{D(\tilde{A})} + C_2|x|^2_{D_{\tilde{A}}(\frac{1}{2},2)} 
\end{equation}
for some constants $C_1$, $C_2$ and for all $x \in D(\tilde{A})$. Furthermore, we assume that $\tilde{Z} \in M^2(0,T;D_{\tilde{A}}(\frac{1}{2},2))$, i.e., $\int_0^T |Z_t|^2 \dd t < \infty$, and $x_0 \in L^2(D_{\tilde{A}}(\frac{1}{2},2))$. Using the same notation as in \cite{Brzezniak1995}, let $Z_T(\tilde{A})$ denote the space
$$
  Z_T(\tilde{A}) := M^2(0,T;D(\tilde{A})) \cap \mathcal{C}\Big(0,T;L^2\Big(D_{\tilde{A}}(\frac{1}{2},2)\Big)\Big).
$$
In the present setting there is an equivalent notion of solutions to the semilinear SPDE~\eqref{eq: Brz SPDE} which is called strict solution, see Definition~4.1 in \cite{Brzezniak1995}.

\begin{definition}
  A \textit{strict solution} to the semilinear SPDE~\eqref{eq: Brz SPDE} is a stochastic process $Y \in Z_T(A)$ satisfying
  $$
    Y_t + \int_0^t \tilde{A} Y_s \dd s = x_0 + \sum_{i=1}^n\int_0^t \tilde{B}^i Y_s \dd W^i_s + \int_0^t \tilde{Z}_s \dd s, \quad t\in [0,T].
  $$
\end{definition}
Indeed, it was shown in Proposition~4.2 in \cite{Brzezniak1995} that the notion of strict solutions is equivalent to the notion of mild solutions under the above stated conditions. The following corollary is a direct application of Theorem~4.6 in \cite{Brzezniak1995} to the SPDE~\eqref{eq: Brz SPDE} in the space of modelled distributions.

\begin{corollary}
  Under the conditions of the present subsection, there exists a unique mild solution $Y\in Z_T(\tilde{A})$ to the SPDE~\eqref{eq: Brz SPDE}. Equivalently, $Y$ is the unique strict solution to~\eqref{eq: Brz SPDE}.
\end{corollary}

\subsection{Mild solutions - modelled distributions and classical functions}\label{subsec:mild solutons}

In general, for a (singular) stochastic partial differential equation it is a rather delicate task to find a suitable regularity structure with an appropriate model and to set up the corresponding partial differential equation in the space of modelled distributions. 

In this section, we present how semilinear SPDEs defined in classical Besov spaces with positive regularity whose linear parts are induced by infinitesimal generators of a $C_0$-semigroup can be lifted to semilinear SPDEs in a suitable space of modelled distributions. As an application of the Fubini type theorem (Theorem~\ref{thm:fubini theorem}) and the reconstruction operator, we show that it is equivalent to solve the SPDEs in classical Besov spaces or in a certain ``abstract'' Besov space of modelled distributions.\smallskip

Let $\mathcal{B}^\gamma_{p,q}$ be the Besov space on $\R^d$ with $\gamma \in \R_+ \setminus \N$ and $1<p,q<\infty$ and thus $\mathcal{B}^\gamma_{p,q}$ satisfies the UMD property. Moreover, we suppose that $F\colon\mathcal{B}^\gamma_{p,q} \rightarrow \mathcal{B}^\gamma_{p,q}$ and $G_i\colon \mathcal{B}^\gamma_{p,q} \rightarrow \mathcal{B}^\gamma_{p,q}$, $i=1,\ldots,n$, are measurable maps, and $W = (W^1, \ldots,W^n)$ is an $\R^n$-valued Brownian motion defined on a complete filtered probability space $(\Omega, \mathcal{F}, \mathbb{F},\P)$. 

We consider the semilinear SPDE in the classical Besov space
\begin{equation}\label{eq: semilinear SPDE}
  \d X_t = (-AX_t + F(X_t)) \dd t + \sum_{i=1}^n G_i(X_t) \dd W^i_t, \quad X_0 = x_0 \in \mathcal{B}^\gamma_{p,q},
\end{equation}
and recall that $(X_t)_{t\in [0,T]}$ is a \textit{c\`adl\`ag mild solution} if it satisfies
\begin{equation}\label{eq: mild solution} 
  X_t = S(t)X_0 + \int_0^t S(t-s)F(X_s)\dd s + \sum_{i=1}^n\int_0^t S(t-s)G_i(X_s) \dd W^i_s,
\end{equation}
for all $t \in [0,T]$, and $(X_t)_{t\in [0,T]}$ has almost surely c\`adl\`ag (i.e. right-continuous with left limits) sample paths, where $S(t)$ denotes the $C_0$-semigroup of bounded linear operators on $\mathcal{B}^\gamma_{p,q}$ generated by the operator~$-A$. Note that the stochastic integral in \eqref{eq: mild solution} is well-defined since the underlying Banach space $\mathcal{B}^\gamma_{p,q}$ satisfies the UMD property. \smallskip

In order to obtain the semilinear SPDE in a space of modelled distributions corresponding to~\eqref{eq: semilinear SPDE} we choose the polynomial regularity structure $\overline{\mathcal{T}}$ on $\R^d$ and the corresponding polynomial model as defined in Examples~\ref{ex:polynomial} and~\ref{ex:polymodel}. Recall that by Proposition~\ref{prop:UMD property} the space $\mathcal{D}^\gamma_{p,q} = \mathcal{D}^\gamma_{p,q}(\overline{\mathcal{T}})$ of modelled distributions is an UMD space. \smallskip

Assuming $\gamma>0$, recall that there exists a unique reconstruction operation $\mathcal{R}$ acting on $\mathcal{D}^\gamma_{p,q}$, precisely defined in Theorem~3.1 \cite{Hairer2017}, cf. Corollary~\ref{cor:reconstruction theorem with local models}. Moreover, the reconstruction operator $\mathcal{R}$ is continuous isomorphism between $\mathcal{D}^\gamma_{p,q}$ and $\mathcal{B}^\gamma_{p,q}$, see Proposition~3.4 in \cite{Hairer2017}.

In order to lift the semilinear SPDE~\eqref{eq: semilinear SPDE} to an equivalent SPDE in the space of modelled distributions, we introduce the conjugation mapping $C_{\mathcal{R}}$ from $\mathcal{L}(\mathcal{B}^\gamma_{p,q})$, the space of all bounded linear operators on $\mathcal{B}^\gamma_{p,q}$, onto $\mathcal{L}(\mathcal{D}^\gamma_{p,q})$, induced by $\mathcal{R}$:
\begin{equation*}
  C_{\mathcal{R}}(L) := \mathcal{R}^{-1} \circ L \circ \mathcal{R}.
\end{equation*}
Notice that $C_{\mathcal{R}}$ is a continuous isomorphism and, by verifying the corresponding definitions, we obtain that:
\begin{enumerate}
  \item[(i)] $S(t)$ is a $C_0$-semigroup on $\mathcal{B}^\gamma_{p,q}$ if and only if 
    $$
      T(t) := C_{\mathcal{R}}(S(t)) =\mathcal{R}^{-1} \circ S(t) \circ \mathcal{R}
    $$ is a $C_0$-semigroup on  $\mathcal{D}^\gamma_{p,q}$.
  \item[(ii)] $-A$ is the infinitesimal generator of a $C_0$-semigroup $S(t)$ on $\mathcal{B}^\gamma_{p,q}$ if and only if $C_{\mathcal{R}}(-A) := \mathcal{R}^{-1} \circ (-A) \circ \mathcal{R}$ is the infinitesimal generator of a $C_0$-semigroup $T(t) = C_{\mathcal{R}}(S(t))$ on $\mathcal{D}^\gamma_{p,q}$.
  \item[(iii)] Let $-A$ be the infinitesimal generator of a $C_0$-semigroup $S(t)$ on $\mathcal{B}^\gamma_{p,q}$. Then the resolvent set $\rho(-A)$ of $-A$ is equal to the resolvent set of $C_{\mathcal{R}}(-A)$. Moreover, for every $\lambda \in \rho(-A)$, the resolvents $R(\lambda:-A)$ and $R(\lambda:C_{\mathcal{R}}(-A))$ satisfies that $R(\lambda:C_{\mathcal{R}}(-A))=C_{\mathcal{R}}(R(\lambda:-A))$. The converse also holds true.
\end{enumerate}

Due to these elementary facts and the Fubini type theorem (Theorem~\ref{thm:fubini theorem}), we can establish the following equivalence result for mild solution of semilinear SPDEs in classical Besov spaces and Besov spaces of modelled distributions, respectively.

\begin{theorem}\label{thm: application of Fubini}
  Let $-A$ be the infinitesimal generator of a $C_0$-semigroup $S(t)$ on $\mathcal{B}^\gamma_{p,q}$. If $(X_t)_{t\in [0,T]}$ is a c\`adl\`ag mild solution to the semilinear SPDE \eqref{eq: semilinear SPDE}, then $Y_t:=\mathcal{R}^{-1}(X_t)$, $t\in [0,T]$, is a c\`adl\`ag mild solution to the semilinear SPDE on $\mathcal{D}^\gamma_{p,q}$
  \begin{equation}\label{eq: SPDE on modelled distribution space}
    \d Y_t = (-\tilde{A}Y_t + \tilde{F}(Y_t))\dd t + \sum_{i=1}^n\tilde{G_i}(Y_t)\dd W^i_t, \quad Y_0 = \mathcal{R}^{-1}x_0 \in \mathcal{D}^\gamma_{p,q},
  \end{equation}
  where $-\tilde{A} = C_{\mathcal{R}}(-A) := \mathcal{R}^{-1} \circ (-A) \circ \mathcal{R}$, $\tilde{F}:= \mathcal{R}^{-1} \circ F \circ \mathcal{R}$ and $\tilde{G_i} := \mathcal{R}^{-1} \circ G_i \circ \mathcal{R}$.
  
  Conversely, if $(Y_t)_{t\in [0,T]}$ is a c\`adl\`ag mild solution to the semilinear SPDE~\eqref{eq: SPDE on modelled distribution space} on $\mathcal{D}^\gamma_{p,q}$ related to the infinitesimal generator $-\tilde{A}$ of a $C_0$-semigroup $T(t)$ on $\mathcal{D}^\gamma_{p,q}$, vector fields $\tilde{F}$ and $\tilde{G_i}$ and initial value $Y_0 \in \mathcal{D}^\gamma_{p,q}$, then $X_t:=\mathcal{R}(Y_t)$, $t\in [0,T]$, is a c\`adl\`ag mild solution to the equation~\eqref{eq: semilinear SPDE} with $-A = C_{\mathcal{R}^{-1}}(-\tilde{A}) = \mathcal{R} \circ (-\tilde{A}) \circ \mathcal{R}^{-1}$, $F = \mathcal{R} \circ \tilde{F} \circ \mathcal{R}^{-1}$, $G_i = \mathcal{R} \circ \tilde{G_i} \circ \mathcal{R}^{-1}$ and $X_0 = \mathcal{R}(Y_0)$.
\end{theorem}

\begin{proof}
  We will prove the second part, the first assertion follows by using a similar argument.
  
  Suppose that $-\tilde{A}$ is the infinitesimal generator of a $C_0$-semigroup $T(t)$ of bounded linear operators on $\mathcal{D}^\gamma_{p,q}$ and $(Y_t)_{t\in [0,T]}$ is a c\`adl\`ag mild solution to the SPDE~\eqref{eq: SPDE on modelled distribution space} associated with vector fields $\tilde{F}$ and $\tilde{G_i}$, that is, 
  $$
    Y_t = T(t)Y_0 + \int_0^t T(t-s)\tilde{F}(Y_s) \dd s  + \sum_{i=1}^n\int_0^t T(t-s)\tilde{G_i}(Y_s) \dd W^i_s, \quad t\in [0,T].
  $$
  Let $S(t) := C_{\mathcal{R}^{-1}}(T(t)) = \mathcal{R} \circ T(t) \circ \mathcal{R}^{-1}$. As we have checked, it is a $C_0$-semigroup of bounded linear operators on $\mathcal{B}^\gamma_{p,q}$ with the generator $-A = \mathcal{R} \circ (-\tilde{A}) \circ \mathcal{R}^{-1}$. For any smooth compactly supported test function $\psi$, using Theorem~\ref{thm:fubini theorem}, we deduce that $X_t := \mathcal{R}(Y_t)$ satisfies the following dynamics (with $F = \mathcal{R} \circ \tilde{F} \circ \mathcal{R}^{-1}$, $G_i = \mathcal{R} \circ \tilde{G_i} \circ \mathcal{R}^{-1}$ and $X_0 = \mathcal{R}(Y_0)$):
  \begin{align*}
    \langle \mathcal{R}(Y_t), \psi \rangle 
    &=\Big  \langle \mathcal{R}\Big(T(t)Y_0\Big), \psi \Big\rangle + \bigg\langle \int_0^t \mathcal{R}\Big(T(t-s)\tilde{F}(Y_s)\Big)\dd s, \psi \bigg\rangle \\
    & \quad \quad + \sum_{i=1}^n\int_0^t\Big \langle \mathcal{R}\Big(T(t-s)\tilde{G_i}(Y_s)\Big), \psi \Big\rangle \dd W^i_s \\
    &= \langle S(t)X_0, \psi \rangle + \bigg\langle \int_0^t S(t-s)F(X_s)\dd s, \psi\bigg \rangle + \sum_{i=1}^n\int_0^t \langle S(t-s)G_i(X_s), \psi \rangle \dd W^i_s,
  \end{align*}
  for $t\in [0,T]$. Since the dual pairing $\langle \cdot, \psi \rangle$ with respect to a test function $\psi$ is a continuous linear functional on $\mathcal{B}^\gamma_{p,q}$, the above observation ensures that $X_t$ satisfies that
  $$
    \langle X_t, \psi \rangle = \langle S(t)X_0, \psi \rangle + \bigg\langle \int_0^t S(t-s)F(X_s)\dd s, \psi \bigg\rangle + \sum_{i=1}^n\bigg\langle \int_0^t  S(t-s)G_i(X_s) \dd W^i_s, \psi \bigg\rangle,
  $$
  where $\int_0^t  S(t-s)G_i(X_s) \dd W^i_s$ is the stochastic integral defined on the UMD space $\mathcal{B}^\gamma_{p,q}$. Since the above equation holds for any test function $\psi$ and $(Y_t)_{t\in [0,T]}$ is c\`adl\`ag, we indeed have
  $$
    X_t = S(t)X_0 + \int_0^t S(t-s)F(X_s)\dd s + \sum_{i=1}^n\int_0^t S(t-s)G_i(X_s) \dd W^i_s,\quad t\in [0,T],
  $$
  which shows that $(X_t)_{t\in [0,T]}$ is a c\`adl\`ag mild solution to the SPDE~\eqref{eq: semilinear SPDE}.
\end{proof}

\subsection{Strict solutions - modelled distributions and classical functions}

Coming back to the semilinear SPDE~\eqref{eq: Brz SPDE}, an equivalence result analogously to Theorem~\ref{thm: application of Fubini} holds also for strict solutions of semilinear SPDEs in classical Besov spaces and Besov spaces of modelled distributions, respectively. In this subsection we again work with the polynomial regularity structure and keep the setting of Subsection~\ref{subsec:mild solutons}.\smallskip

For simplicity, we consider the Besov space $\mathcal{B}^{\gamma}_{p,2}(\R^d)$ with $\gamma >0$, $2\le p <\infty$ and the semilinear SPDEs defined in $\mathcal{B}^{\gamma}_{p,2}(\R^d)$:
\begin{equation}\label{eq:brz besov space}
  \d X_t = - A X_t \dd t + Z_t + \sum_{i=1}^n B^i X_t \dd W^{i}_t,\quad X_0=x_0,\quad t\in [0,T],
\end{equation}
where $Z$ is a function taking values in $\mathcal{B}^{\gamma}_{p,2}(\R^d)$, $B^i$ is a linear (generally unbounded) operator for $i=1,\dots,n$, and $-\tilde{A}$ is the infinitesimal generator of an analytic semigroup $S(t)=\exp (-t\tilde{A})$ on $\mathcal{B}^{\gamma}_{p,2}(\R^d)$ satisfying (H1) and (H2). Note that $\mathcal{B}^{\gamma}_{p,2}(\R^d)$ 
is a Banach space having the $M$-type $2$ and UMD properties, see Corollary~A.6 in \cite{Brzezniak1995}. All the ingredients fulfill analogous conditions as stated in Subsection~\ref{subsec:existence of SPDE} or \cite{Brzezniak1995}, respectively.
 
We will show that if $(X_t)_{t\in [0,T]}$ is a strict solution to the SPDE~\eqref{eq:brz besov space} on $\mathcal{B}^{\gamma}_{p,2}$, then $Y_t:=\mathcal{R}^{-1}X_t$, $t\in [0,T]$, is a strict solution to SPDE~\eqref{eq: Brz SPDE} on $\mathcal{D}^\gamma_{p,2}$ of the form
\begin{equation}\label{eq: Brz SPDE in modelled distribution spaces}
  \d Y_t = -\tilde{A}Y_t \dd t + \tilde{Z}_t + \sum_{i=1}^n \tilde{B^i}Y_t \dd W^i_t, \quad Y_0 = \mathcal{R}^{-1}x_0,
\end{equation}
where $\tilde{A} := C_{\mathcal{R}}(A) = \mathcal{R}^{-1} \circ A \circ \mathcal{R}$, $\tilde{Z}_t:= \mathcal{R}^{-1}Z_t$ and $\tilde{B^i} = C_{\mathcal{R}}(B^i) = \mathcal{R}^{-1} \circ B^i \circ \mathcal{R}$.

To this end, let us first check that $\tilde{A}$, $\tilde{Z}$ and $\tilde{B^i}$ satisfy the same properties on $\mathcal{D}^\gamma_{p,2}$ as for $A$, $f$ and $B^i$ on $\mathcal{B}^\gamma_{p,2}$. First notice that, if $Z$ satisfies that $\int_0^t |Z_s|^2 \dd s< \infty$, then the boundedness of $\mathcal{R}^{-1}$ will ensure that $\tilde{Z} = \mathcal{R}^{-1}Z$ is also square integrable.

Let us now focus on the linear operator $\tilde{A}$. As we have noticed, the resolvent $R(\lambda:-\tilde{A})$ satisfies that $C_{\mathcal{R}}(R(\lambda:-A))$. It follows that for any $n \ge 1$, we have $R(\lambda:-\tilde{A})^{n} = C_{\mathcal{R}}(R(\lambda:-A)^n)$ and $\tilde{A}R(\lambda:-\tilde{A})^{n} = C_{\mathcal{R}}(AR(\lambda:-A)^n)$ on $D(\tilde{A}) = \mathcal{R}^{-1}(A)$. Therefore, since $-A$ is the infinitesimal generator of an analytic semigroup $\exp\{-tA\}$ on $\mathcal{B}^\gamma_{p,2}$, using \cite[Theorem~5.5]{Pazy1983} we can immediately deduce that $-\tilde{A}$ generates an analytic semigroup, say, $\exp\{-t\tilde{A}\}$, on the modelled distribution space $\mathcal{D}^\gamma_{p,2}$. The same argument implies that if for all $\lambda \ge 0$, $(A+\lambda I)^{-1}$ exists and satisfies the bound \eqref{eq: condition 1 for Brz SPDE}, then for all $\lambda \ge 0$, $(\tilde{A} + \lambda I)^{-1}$ exists (note that $(\tilde{A} + \lambda I)^{-1} = R(\lambda;-\tilde{A})$) and it satisfies the bound \eqref{eq: condition 1 for Brz SPDE} with $M$ replaced by $M\|\mathcal{R}\|\|\mathcal{R}^{-1}\|$. Repeating this procedure once again we can show that for all $s \in \R$, $\tilde{A}^{is}$ form a $C_0$-group on $\mathcal{D}^\gamma_{p,2}$ and the bound \eqref{eq: condition 2 for Brz SPDE} holds with $K$ replaced by $K\|\mathcal{R}\|\|\mathcal{R}^{-1}\|$; and $D_{\tilde{A}}(\frac{1}{2},2) = \mathcal{R}^{-1}(D_A(\frac{1}{2},2))$. As a consequence, it holds that the linear operators $\tilde{B^i}$ satisfy the bound \eqref{eq: condition on unbounded operator B} as for $B^i$'s but with different constants determined by $\|\mathcal{R}\|$ and $\|\mathcal{R}\|^{-1}$. In particular, we conclude that 
\begin{equation*}
  Z_T(\tilde{A}) = \mathcal{R}^{-1}(Z_T(A)).
\end{equation*}

Now we are in the position to prove the following result:

\begin{proposition}
  Let $A$, $f$ and $B^i$, $i=1,\ldots,n$, be defined on $\mathcal{B}^\gamma_{p,2}$ and satisfy all above conditions listed in Subsection~\ref{subsec:existence of SPDE}. Let $\tilde{A}$, $\tilde{f}$ and $\tilde{B^i}$, $i=1,\ldots,n$, be defined as above. Then, $X \in Z_T(A)$ is a strict solution to semilinear SPDE~\eqref{eq:brz besov space} associated with $(A,f,(B^i))$ if and only if $Y:=\mathcal{R}^{-1}X \in Z_T(\tilde{A})$ is a strict solution to semilinear SPDE~\eqref{eq: Brz SPDE in modelled distribution spaces} on $\mathcal{D}^\gamma_{p,2}$ associated with $(\tilde{A},\tilde{f},(\tilde{B^i}))$.
\end{proposition}

\begin{proof}
  Let $X \in Z_T(A)$ be a strict solution to problem~\eqref{eq:brz besov space} associated with $(A,f,(B^i))$. Then as we have shown, $\mathcal{R}^{-1}X$ takes values in $Z_T(\tilde{A})$. By Theorem~\ref{thm: application of Fubini}, it holds that $\mathcal{R}^{-1}X$ is a mild solution to problem \eqref{eq: Brz SPDE in modelled distribution spaces} on $\mathcal{D}^\gamma_{p,2}$ associated with $(\tilde{A},\tilde{f},(\tilde{B^i}))$. Since we have checked that $(\tilde{A},\tilde{f},(\tilde{B^i}))$ satisfy all conditions listed in \cite{Brzezniak1995} on the modelled distribution space $\mathcal{D}^\gamma_{p,2}$ as the conditions fulfilled by $(A,f,(B^i))$ on the Besov space $\mathcal{B}^\gamma_{p,2}$, using Proposition~4.2 in \cite{Brzezniak1995} we can conclude that $\mathcal{R}^{-1}X$ is also a strict solution to problem~\eqref{eq: Brz SPDE in modelled distribution spaces} on $\mathcal{D}^\gamma_{p,2}$ associated with $(\tilde{A},\tilde{f},(\tilde{B^i}))$. The converse can be proved by using the same reasoning, just by noticing that $A = C_{\mathcal{R}^{-1}}(\tilde{A}) = \mathcal{R} \circ \tilde{A} \circ \mathcal{R}^{-1}$.
\end{proof}

\section{Distribution-valued It{\^o} stochastic differential equations}\label{sec: distribution valued SDEs}

As a toy example of distribution-valued stochastic differential equations, we consider
\begin{equation}\label{eq: distribution-valued SDE}
  \d Y_t = (Y_t \cdot \phi) \dd W_t, \quad Y_0 = \xi,
\end{equation}
for $\phi \in \mathcal{C}^\beta(\mathcal{K})$ with $\beta > 0$ and $\xi \in \mathcal{C}^\alpha(\mathcal{K})$ with $\alpha<0$ satisfying $\alpha + \beta > 0$, where $\mathcal{K} \subset \R^d$ is a compact set and $W= (W_t)_{t \in [0,T]}$ is a Brownian motion. Recall that $\mathcal{C}^\beta(\mathcal{K}):=\mathcal{B}^\beta_{\infty,\infty}(\mathcal{K})$ stands for the usual space of $\beta$-H\"older continuous functions restricted to the compact set $\mathcal{K}$. The product mapping $\cdot$ denotes the continuous bilinear map from $\mathcal{C}^\alpha(\mathcal{K}) \times \mathcal{C}^\beta(\mathcal{K})$ into $\mathcal{C}^\alpha(\mathcal{K})$ which continuously extends the classical product of smooth functions $(f,g) \mapsto fg$, see e.g. \cite[Proposition~4.14]{Hairer2014}.

While the linear SDE~\eqref{eq: distribution-valued SDE} might look rather simple at the first glimpse, neither an approach based on stochastic integration nor a rough path based approach directly provides an existence and uniqueness result for~\eqref{eq: distribution-valued SDE}. Indeed, since $\mathcal{C}^\alpha(\mathcal{K})$ is neither an UMD Banach space nor of martingale type $2$, the It\^o integral $\int_0^\cdot (Y_t \cdot \phi)\dd W_t$ is not defined via classical stochastic integration theory (e.g. \cite{vanNeerven2007}). To overcome, this issue one could embed the H\"older space $\mathcal{C}^\alpha(\mathcal{K})$ into a suitable Sobolev space and consider \eqref{eq: distribution-valued SDE} as an equation on Soblev spaces. However, in this way the obtained solution a priori has not optimal regularity in space, which is expected to be $\mathcal{C}^\alpha(\mathcal{K})$ for each $t$. Alternatively, we may interpret equation~\eqref{eq: distribution-valued SDE} as an random rough differential equation (RRDE) with respect to the It\^o rough path $\mathbf{W} = (W,\mathbb{W})$ with $\mathbb{W}^{i,j}_{s,t} = \int_s^t (W_r^i - W^i_s)\dd W^j_r$ being the It\^o integral. However, since the underlying vector field $\eta \mapsto \phi \cdot \eta$ is a bounded linear mapping, we can only conclude that there exists a unique \textit{local} solution to the RRDE~\eqref{eq: distribution-valued SDE} by using classical rough path theory (e.g. \cite[Chapter~8]{Friz2014}). In fact, global existence and uniqueness results for rough differential equations (RDEs) with linear vector fields are a delicate challenge. Let us mention that linear RDEs have been considered by several authors since they are an essential tool for studying the derivative of the It\^o map and its flow properties. In particular, \cite{Lyons1998} shows that linear RDEs driven by geometric rough paths admit global unique solutions, but not covering non-geometric rough paths like It\^o rough paths. A similar result can be found in~\cite{Friz2010} for linear RDEs driven by weakly geometric rough paths in finite dimension. For further results in this direction and references, see, e.g., \cite{Lejay09}. 

In the following we provide a unique \textit{global} solution for the stochastic differential equation~\eqref{eq: distribution-valued SDE} with optimal H\"older regularity in space based on a ``mixed'' approach using regularity structures and stochastic integration in UMD Banach spaces.

As a first step, we formulate a differential equation, corresponding to the SDE~\eqref{eq: distribution-valued SDE}, on the space of modelled distributions for a specific regularity structure. This relies on the observation that the product mapping $\cdot$ from $\mathcal{C}^\alpha \times \mathcal{C}^\beta$ into $\mathcal{C}^\alpha$ can be constructed in an elegant way via regularity structures, see Proposition~4.14 in \cite{Hairer2014}. Following the construction in the proof of Proposition~4.14 in \cite{Hairer2014}, we build a regularity structure $\mathcal{T}^\xi = (A,T,G)$ such that
\begin{itemize}
  \item $A = \N \cup (\N+\alpha)$,
  \item $T = V \oplus U$, where $V = \mathcal{T}_d$ is the canonical polynomial model space (see Example~\ref{ex:polynomial}) spanned by monomials $X^k$, $k \in \N^d$, and $U$ is isomorphic to $V$ but with canonical basis $\Xi X^k$,
  \item $G$ is the canonical structure group for the polynomial model space $\mathcal{T}_d$.
\end{itemize}
Moreover, we define a model $(\Pi^\xi,\Gamma)$ for $\mathcal{T}^\xi$ by
\begin{itemize}
  \item $(\Pi^\xi_x X^k)(y) = (y-x)^k$ and $(\Pi^\xi_x \Xi X^k)(y) = (y-x)^k\xi(y)$,
  \item $\Gamma_{x,y}X^k = (X + x-y)^k$ and $\Gamma_{x,y}\Xi X^k = \Xi(X + x-y)^k$,
\end{itemize}
for all $x,y \in \R^d$. The product $*$ between $V$ and $U$ is given by the natural identity
\begin{equation*}
  (\Xi X^k) * (X^l) = \Xi X^{k+l}.
\end{equation*}

Let $\mathcal{R}^\xi$ be the reconstruction operator in \cite[Theorem~3.1]{Hairer2017} defined on space of modelled distributions $\mathcal{D}^\gamma_{p,q}=\mathcal{D}^\gamma_{p,q}(\mathcal{T}^\xi)$ for $\gamma > 0$, $p,q\in[1,\infty]$. Since the constant map $\Xi$ belongs to $\mathcal{D}^\gamma_{\infty,\infty}$ for all $\gamma > 0$ and $\Pi_x^\xi(\Xi) = \xi$ holds for all $x \in \R^d$, the uniqueness property of $\mathcal{R}^\xi$ implies that $\mathcal{R}^\xi(\Xi) = \xi$. Finally, by Proposition~3.4 in \cite{Hairer2017} or \cite[(2.6)]{Hairer2014} there exists a $\tilde{\phi} \in \mathcal{D}^\beta_{\infty,\infty}$ such that $\mathcal{R}^\xi(\tilde{\phi}) = \phi$.

The counterpart of~\eqref{eq: distribution-valued SDE} on the space of modelled distributions is 
\begin{equation}\label{eq: RDE lifted to modelled distribution spaces}
  \d Z_t = (Z_t * \tilde{\phi}) \dd W_t, \quad Z_0 = \Xi, \quad t\in [0,T].
\end{equation}
In a similar spirit as for the semilinear SPDEs of Section~\ref{sec:SPDEs}, we first obtain a unique global solution~$Z$ using stochastic integration on the space of modelled distributions. Applying the reconstruction theorem and the Fubini type theorem (Theorem~\ref{thm:fubini theorem}) we deduce the existence of a unique global solution $Y$ of the stochastic differential equation~\eqref{eq: distribution-valued SDE}.

\begin{theorem}
  Let $\alpha<0$, $\beta>0$ be such that $\alpha + \beta > 0$, $\phi \in \mathcal{C}^\beta(\mathcal{K})$, $\xi \in \mathcal{C}^\alpha(\mathcal{K})$, and $(\mathcal{T}^\xi,(\Pi^\xi,\Gamma))$ be the regularity structure induced by $\xi$ defined as above. Let $\tilde{\phi} \in \mathcal{D}^\beta_{\infty,\infty}$ be the modelled distribution for $(\mathcal{T}^\xi,(\Pi^\xi,\Gamma))$ satisfying $\mathcal{R}^\xi(\tilde{\phi}) = \phi$. Then, there exists a unique solution $Z$ taking values in $\mathcal{D}^{\alpha + \beta}_{\infty,\infty}(U)$ to the differential equation~\eqref{eq: RDE lifted to modelled distribution spaces} and the process $Y_t := \mathcal{R}^\xi(Z_t)$, $t\in [0,T]$, is the unique solution to the SDE~\eqref{eq: distribution-valued SDE} defined on the H\"older space~$\mathcal{C}^\alpha(\mathcal{K})$.
\end{theorem}

\begin{proof}
  To simplify the notation we omit $\mathcal{K}$ in the following and w.l.o.g. we assume that $\beta \in (0,1)$ as the general case can be handled in an analogous way. The lifted modelled distribution $\tilde{\phi} \in \mathcal{D}^\beta_{\infty,\infty}$ of $\phi$ can be written as $\tilde{\phi}(x) = \phi(x)\1$, where $\1$ is the basis of $V_0 \simeq \R$, and $\phi$ is H\"older continuous of regularity $\beta$. The norm of $\phi$ in $\mathcal{C}^\beta$ is denoted by $\|\phi\|_{\beta}$ and given by
  \begin{equation*}
    \|\phi\|_{\beta} := \|\phi\|_{\infty} + |\phi|_{\beta}, \quad |\phi|_{\beta}:= \inf\big\{C \ge 0: |\phi(x) - \phi(y)|\le C|x-y|^\beta\,\,\, \forall x,y\in \mathcal{K} \big\}.
  \end{equation*}
  
  \textit{Step~1:} Let $U_\alpha := \text{span}(\Xi) \subset U$. Note that each $f \in \mathcal{D}^{\alpha + \beta}_{\infty,\infty}(U_\alpha)$ can  be represented by $f(x) = f_\alpha(x)\Xi$, where $f_\alpha \in \mathcal{C}^\beta$ and $\vertiii{f}_{\alpha+\beta,\infty,\infty} = \|f_\alpha\|_{\beta}$. In other words, we have $\mathcal{D}^{\alpha + \beta}_{\infty,\infty}(U_\alpha) \simeq \mathcal{C}^\beta$ is a Banach space. Then we have 
  \begin{equation*}
    f(x) * \tilde{\phi}(x) = f_\alpha(x)\phi(x) \Xi\quad \text{and} \quad \vertiii{f*\tilde{\phi}}_{\alpha+\beta,\infty,\infty} \le \|\phi\|_{\beta}\vertiii{f}_{\alpha+\beta,\infty,\infty},
  \end{equation*}
  and thus the linear vector field from $\mathcal{D}^{\alpha + \beta}_{\infty,\infty}(U_\alpha)$ into itself: $f\mapsto f*\tilde{\phi}$
  is continuous. Since $\Xi$ as a constant map belongs to $\mathcal{D}^{\alpha+ \beta}_{\infty,\infty}(U_\alpha)$ as well, the standard results from rough path theory (see e.g. \cite{Friz2014}) ensure that RRDE~\eqref{eq: RDE lifted to modelled distribution spaces} (w.r.t. the It\^o lift of Brownian motion) admits a unique solution $Z_t$ taking values in $\mathcal{D}^{\alpha+ \beta}_{\infty,\infty}(U_\alpha)$ with $Z_0 = \Xi$ on some subinterval $[0,t_1)$ with $t_1 \le T$. Moreover, $Z_t$ has $\rho$-H\"older continuous sample paths in time $t \in [0,t_1)$ for any $\rho < 1/2$. 
  
  Next let us consider the process $Y_t := \mathcal{R}^\xi(Z_t)$. By the reconstruction theorem (\cite[Theorem~3.10]{Hairer2014}), $Y_t$ is uniquely determined by $Z_t$ and takes values in $\mathcal{C}^\alpha(\R^d)$. Moreover, by the It\^o's formula for rough path integration applied to the bounded linear mapping $\mathcal{R}^\xi$, see Proposition~5.6 in \cite{Friz2014}, one has that $Y_t$, $t \in [0,t_1)$ satisfies 
  \begin{equation}\label{eq: the RDE in function spaces}
    Y_t = Y_0 + \int_0^t \mathcal{R}^\xi(Z_t * \tilde{\phi}) \dd \mathbf{W}_t, \quad Y_0 = \mathcal{R}^\xi(\Xi) = \xi.
  \end{equation}

  \textit{Step~2}: In this step we will show that $Y_t = \mathcal{R}^\xi(Z_t)$ solves the stochastic differential equation~\eqref{eq: distribution-valued SDE}, as long as $Z_t$ is a solution to \eqref{eq: RDE lifted to modelled distribution spaces}. In view of equation~\eqref{eq: the RDE in function spaces}, it suffices to show that for each $t$ such that $Z_t$ is well-defined, one has $\mathcal{R}^\xi(Z_t * \tilde{\phi}) = \mathcal{R}^\xi(Z_t) \cdot \phi = Y_t \cdot \phi$.
  
  Let us first recall how the product $\cdot$ is constructed via regularity structures. In view of Proposition~4.14 in \cite{Hairer2014}, for each $\mathcal{R}^\xi(Z_t)=:\xi_t\in \mathcal{C}^\alpha(\R^d)$ we denote by $\Xi_t$ the formal basis of the level-$\alpha$ space of the regularity structure $(\mathcal{T}^{\xi_t},(\Pi^{\xi_t},\Gamma))$ which is defined exactly as $(\mathcal{T}^{\xi},(\Pi^{\xi},\Gamma))$ by replacing $\Xi$ through $\Xi_t$, then it holds that
  \begin{equation*}
    \mathcal{R}^\xi(Z_t) \cdot \phi = \mathcal{R}^{\xi_t} (\Xi_t * \tilde{\phi}),
  \end{equation*}
  where $\mathcal{R}^{\xi_t}$ is the reconstruction operator associated with  $(\mathcal{T}^{\xi_t},(\Pi^{\xi_t},\Gamma))$. Now we need to check that $\mathcal{R}^\xi(Z_t * \tilde{\phi}) = \mathcal{R}^{\xi_t} (\Xi_t * \tilde{\phi})$ for all $t \in [0,T]$.
  
  Towards this aim, we first note that both $Z_t * \tilde{\phi}$ and $\Xi_t * \tilde{\phi}$ are in $\mathcal{D}^{\alpha+ \beta}_{\infty,\infty}$ for respective regularity structures, with $\gamma := \alpha + \beta > 0$ the reconstruction theorem (Theorem~3.10 in\cite{Hairer2014}) tells us that the following bounds are valid:
  \begin{align*}
    \Big|  \langle \mathcal{R}^\xi(Z_t * \tilde{\phi}) - \Pi^\xi_x(Z_t * \tilde{\phi})(x) , \eta^\lambda_x \rangle    \Big| \lesssim \lambda^\gamma \quad\text{and}\quad
    \Big|  \langle \mathcal{R}^{\xi_t} (\Xi_t * \tilde{\phi}) - \Pi^{\xi_t}_x(\Xi_t * \tilde{\phi})(x) , \eta^\lambda_x \rangle    \Big| \lesssim \lambda^\gamma,
  \end{align*}
  where proportionality constants are uniform in $x \in \R^d$, $\lambda \in (0,1]$ and $\eta \in \mathcal{B}^r$ for $r > |\alpha|$ but may depend on the norms of $(\Pi^\xi,\Gamma)$, $(\Pi^{\xi_t},\Gamma)$, $Z_t * \tilde{\phi}$ or $\Xi_t * \tilde{\phi}$. On the other hand, if we write $Z_t(x) = Z^\alpha_t(x)\Xi$, then it holds that $(Z_t * \tilde{\phi})(x) = Z^\alpha_t(x)\phi(x)\Xi$, and therefore $\Pi^{\xi}_x(Z_t * \tilde{\phi})(x) = Z^\alpha_t(x)\phi(x)\xi$. The same reasoning yields that $\Pi^{\xi_t}_x(\Xi_t * \tilde{\phi})(x) = \phi(x)\xi_t$. However, since $\xi_t = \mathcal{R}^\xi(Z_t)$, the reconstruction theorem applied to $\mathcal{R}^\xi(Z_t)$ gives that
  $$
    \Big|  \langle \mathcal{R}^\xi(Z_t) - Z^\alpha_t(x)\xi , \eta^\lambda_x \rangle    \Big| \lesssim \lambda^\gamma,
  $$
  where we used the fact that $\Pi^\xi_x(Z_t)(x) = Z^\alpha_t(x)\xi$. To summarize, now we obtain that 
  \begin{align*}
    \Big|  \langle \Pi^\xi_x(Z_t * \tilde{\phi})(x) - \Pi^{\xi_t}_x(\Xi_t * \tilde{\phi})(x) , \eta^\lambda_x \rangle    \Big| &= \Big|  \langle \phi(x)(\mathcal{R}^\xi(Z_t) - Z^\alpha_t(x)\xi) , \eta^\lambda_x \rangle    \Big|\\
    &\le \|\phi\|_\infty \Big|  \langle \mathcal{R}^\xi(Z_t) - Z^\alpha_t(x)\xi , \eta^\lambda_x \rangle    \Big| \lesssim \lambda^\gamma.
  \end{align*}
  Combining all the above bounds, we deduce that
  $$
    \Big|  \langle \mathcal{R}^\xi(Z_t * \tilde{\phi}) - \mathcal{R}^{\xi_t} (\Xi_t * \tilde{\phi}) , \eta^\lambda_x \rangle \Big| \lesssim \lambda^\gamma,
  $$
  uniformly in $x \in \R^d$, $\lambda \in (0,1]$ and $\eta \in \mathcal{B}^r$ for $r > |\alpha|$. Then, since $\gamma > 0$, the same argument used in the proof of the uniqueness of reconstruction operator, see \cite[Theorem~3.10]{Hairer2014} shows that $\mathcal{R}^\xi(Z_t * \tilde{\phi}) = \mathcal{R}^{\xi_t} (\Xi_t * \tilde{\phi})$ for all $t \in [0,T]$, as we claimed at the beginning of this part. As a consequence, $Y_t = \mathcal{R}^\xi(Z_t)$ solves~\eqref{eq: distribution-valued SDE}. 
  
  \textit{Step~3:} Let $\infty >p >1$. Due to the compactness of $\mathcal{K}$, the constant mapping $\Xi$ belongs to $\mathcal{D}^{\alpha + \bar{\beta}}_{p,p}(U_\alpha)$ for any $\bar{\beta} < \beta$ and $\bar{\beta} + \alpha > 0$. Since $\mathcal{D}^{\alpha+\bar{\beta}}_{p,p}(U_\alpha) \simeq \mathcal{B}^{\bar{\beta}}_{p,p}$ is an UMD Banach space, we are able to consider equation~\eqref{eq: RDE lifted to modelled distribution spaces} as a stochastic differential equation on $\mathcal{D}^{\alpha+\bar{\beta}}_{p,p}(U_\alpha)$. 
  
  Let us consider the multiplication map $f \mapsto f*\tilde{\varphi}$ for $f \in \mathcal{D}^{\alpha+\bar{\beta}}_{p,p}(U_\alpha)$. As for the H\"olderian case, $f$ can be written as $f(x) = f_\alpha(x) \Xi$ and $f(x) * \tilde{\varphi}(x) = f_\alpha(x)\phi(x) \Xi$. As a consequence, we obtain that
  $$
    \vertiii{f*\tilde{\varphi}}_{\bar{\gamma},p,p} = \Big(\int_{h \in B(0,1)} \Big\|\frac{|f_\alpha(x+h)\varphi(x+h) - f_\alpha(x)\varphi(x)|}{|h|^{\bar{\gamma} - \alpha}} \Big\|^p_{L^p(\mathcal{K}, \d x)} \frac{\d h}{|h|^d}\Big)^{\frac{1}{p}}
  $$
  with $\bar{\gamma} := \alpha + \bar{\beta}$. An application of triangle inequality shows that
  \begin{align*}
    \vertiii{f*\tilde{\varphi}}_{\bar{\gamma},p,p} &\leq \Big(\int_{h \in B(0,1)} \Big\|\frac{|f_\alpha(x+h)\varphi(x+h) - f_\alpha(x)\varphi(x+h)|}{|h|^{\bar{\gamma} - \alpha}} \Big\|^p_{L^p(\mathcal{K}, \d x)} \frac{\d h}{|h|^d}\Big)^{\frac{1}{p}} \\ 
    &\quad + \Big(\int_{h \in B(0,1)} \Big\|\frac{|f_\alpha(x)\varphi(x+h) - f_\alpha(x)\varphi(x)|}{|h|^{\bar{\gamma} - \alpha}} \Big\|^p_{L^p(\mathcal{K}, \d x)} \frac{\d h}{|h|^d}\Big)^{\frac{1}{p}}.
  \end{align*}
  The first term on the right-hand side is bounded by $|\varphi|_{\infty} \vertiii{f*\tilde{\varphi}}_{\gamma,p,p}$. For the second term, note that by the assumption that $\varphi \in \mathcal{C}^{\beta}$, one has for any $x \in \mathcal{K}$ that
  \begin{equation*}
    |\varphi(x+h) - \varphi(x)| \le |\varphi|_{\beta}|h|^{\beta}, 
  \end{equation*}
  and consequently that 
  \begin{align*}
    \Big(\int_{h \in B(0,1)} \Big\|\frac{|f_\alpha(x)\varphi(x+h) - f_\alpha(x)\varphi(x)|}{|h|^{\bar{\gamma} - \alpha}} \Big\|^p_{L^p(\mathcal{K},\d x)}& \frac{\d h}{|h|^d}\Big)^{\frac{1}{p}} \\
    &\leq |\varphi|_{\beta}\|f_\alpha\|_{L^p}\Big(\int_{h \in B(0,1)} |h|^{p(\beta - \bar{\beta}) - d} \d h\Big)^{\frac{1}{p}}.
  \end{align*}
  Since $\beta - \bar{\beta} > 0$ and $p>1$, the integral $\int_{h \in B(0,1)} |h|^{p(\beta - \bar{\beta}) - d}\dd h$ can be uniformly bounded by a constant only depending on $\beta - \bar{\beta}$ for all $p > 1$. Therefore, we conclude that there is a constant $C(\varphi)$ only depending on the norm of $\varphi$ (and our choice of $\bar{\beta}$) such that for \textit{all} $p \in (1,\infty)$, the linear map $f \mapsto f*\tilde{\varphi}$ for $f \in \mathcal{D}^{\alpha+\bar{\beta}}_{p,p}(U_\alpha)$ is bounded with the operator norm bounded by $C(\varphi)$. 
  
  As a result, the SDE~\eqref{eq: RDE lifted to modelled distribution spaces} defined on $\mathcal{D}^{\alpha + \bar{\beta}}_{p,p}$, namely,
  \begin{equation*}
    \d Z^p_t = (Z^p_t * \tilde{\phi}) \dd W_t, \quad Z^p_0 = \Xi,
  \end{equation*}
  possesses a unique solution $Z^p_t \in \mathcal{D}^{\alpha + \bar{\beta}}_{p,p}$ defined on $[0,T]$ for any $p \in (1,\infty)$, by standard argument from the theory of SDEs. By Proposition~5.1 in \cite{Friz2014} we further conclude that $Z^p_t$ is also the unique global solution to the RRDE~\eqref{eq: RDE lifted to modelled distribution spaces} with respect to the It\^o lift. 
  
  Now we choose $p\in(1,\infty)$ large enough such that $\alpha + \bar{\beta} - 1/p > 0$. Since $\mathcal{D}^{\alpha + \bar{\beta}}_{p,p}(U_\alpha) \simeq \mathcal{B}^{\bar{\beta}}_{p,p}$, a classical Besov embedding argument shows that $\mathcal{D}^{\alpha + \bar{\beta}}_{p,p}(U_\alpha) \subset \mathcal{D}^{\alpha + \bar{\beta} - 1/p}_{\infty,\infty}(U_\alpha) \simeq \mathcal{C}^{\bar{\beta} - 1/p}$ continuously, we can consider $Z^p$ as the unique global solution of the RDE~\eqref{eq: RDE lifted to modelled distribution spaces} defined on $\mathcal{D}^{\alpha + \bar{\beta} - 1/p}_{\infty,\infty}(U_\alpha)$. Let $Y^p_t := \mathcal{R}^{\xi}(Z^p_t)$ for all $t \in [0,T]$. Then the same argument as we established in Step~2 with $\gamma =  \alpha + \bar{\beta} - 1/p > 0$ gives that $Y^p_t$ is a continuous curve in $\mathcal{C}^\alpha$ such that it solves equation~\eqref{eq: distribution-valued SDE} (with respect to the It\^o lift), i.e.,
  \begin{equation*}
    Y^p_t = \int_0^t (Y^p_t \cdot \phi) \dd W_t, \quad Y^p_0 = \xi, \quad t\in [0,T].
  \end{equation*}
  Moreover, since the multiplication with $\varphi$ is a bounded linear vector field, we can apply the continuity of It\^o--Lyons map (see e.g. Chapter~8 in \cite{Friz2014}) to deduce that the RDE~\eqref{eq: distribution-valued SDE} admits a unique solution (as long as it exists). This implies that the solution $Y^p$ actually does not depend on the choice of~$p$, and the proof is completed.
\end{proof}

\begin{remark}
  The SDE~\eqref{eq: distribution-valued SDE} with the linear vector field $V(Y_t):=Y_t\cdot \phi$ can be considered as one toy example where it has advantages to work with a mixed stochastic integration--regularity structure approach. Other examples of distribution-valued It\^o stochastic differential equations, which could be treated similarly to \eqref{eq: distribution-valued SDE} and the ''mixed`` approach has its advantage, are stochastic differential equations (on the plane) with vector fields like
  \begin{equation*}
     V(Y_t):= \int_0^\cdot g(Y_t(u)) \dd \mathbf{X}_u\quad \text{or} \quad  V(Y_t):= \tilde Y_t,
  \end{equation*}
  where $\tilde Y$ is the unique solution to $ \tilde Y_t(\cdot) := Y_t(\cdot) + \int_0^\cdot g(\tilde Y_t(u)) \dd \mathbf{X}_u$, for a given rough path $\mathbf{X}$ and a vector field~$g$ with suitable regularities.
\end{remark}

\subsection*{Acknowledgment} 

The authors thank Martin Hairer and Cyril Labb\'e for sharing their impressive ideas on embedding theorems for Besov spaces of modelled distributions at a pre-preprint stage of their work~\cite{Hairer2017} with us. D.J.P and J.T. gratefully acknowledge financial support of the Swiss National Foundation under Grant No.~$200021\_163014$. D.J.P. was affiliated to ETH Z\"urich when this project was initiated.


\vspace{0.5cm}
\noindent Chong Liu, Eidgen\"ossische Technische Hochschule Z\"urich, Switzerland\\ 
{\small \textit{E-mail address:} chong.liu@math.ethz.ch}\bigskip

\vspace{-.1cm}
\noindent David J. Pr\"omel, University of Oxford, United Kingdom\\
{\small\textit{E-mail address:} proemel@maths.ox.ac.uk}\bigskip

\vspace{-.1cm}
\noindent Josef Teichmann, Eidgen\"ossische Technische Hochschule Z\"urich, Switzerland\\
{\small\textit{E-mail address:} josef.teichmann@math.ethz.ch}\bigskip

\end{document}